\newtheorem{lemma}{Lemma}[section]
\newtheorem{corollary}[lemma]{Corollary}
\newtheorem{definition}[lemma]{Definition}
\newtheorem{proposition}[lemma]{Proposition}
\newtheorem{theorem}[lemma]{Theorem}
\newcommand{\field}[1]{\mathbb{#1}}
\newcommand{\scr}[1]{\mathscr{#1}}
\newcommand{\C}{\field{C}}
\newcommand{\F}{\field{F}}
\newcommand{\K}{\field{K}}
\newcommand{\R}{\field{R}}
\newcommand{\Rn}{\R^n}
\newcommand{\Rnbar}{\overline{\Rn}}
\newcommand{\Cal}[1]{\mathcal{#1}}
\newcommand{\cF}{\Cal{F}}
\newcommand{\ud}{{\rm{d}}}
\newcommand{\bbl}{{\bf bl}}
\newcommand{\bB}{{\bf B}}
\newcommand{\bG}{{\bf G}}
\newcommand{\bN}{{\bf N}}
\newcommand{\bbo}{{\bf 0}}
\newcommand{\bp}{{\bf p}}
\newcommand{\bP}{{\bf P}}
\newcommand{\br}{{\bf r}}
\newcommand{\bS}{{\bf S}}
\newcommand{\bt}{{\bf t}}
\newcommand{\bu}{{\bf u}}
\newcommand{\bv}{{\bf v}}
\newcommand{\bx}{{\bf x}}
\newcommand{\by}{{\bf y}}
\newcommand{\cH}{\Cal{H}}
\newcommand{\cM}{\Cal{M}}
\newcommand{\cU}{\Cal{U}}
\newcommand{\clos}{{\rm clos}}
\newcommand{\crit}{{\rm crit}}
\newcommand{\dd}{{\partial}}
\newcommand{\dt}{{\partial_t}}
\newcommand{\dbx}{{\partial_\bx}}
\newcommand{\gm}{\gamma}
\newcommand{\Gm}{\Gamma}
\newcommand{\kp}{{\kappa}}
\newcommand{\la}{{\langle}}
\newcommand{\Mbar}{\overline{M}}
\newcommand{\nbt}{{\nabla \bt}}
\newcommand{\nf}{{\nabla f}}
\newcommand{\nmbx}{{\nu_M^\bx}}
\newcommand{\nmt}{{\nu_M^t}}
\newcommand{\nF}{{\nabla F}}
\newcommand{\nubtm}{{\nu_{\,\bt_M}}}
\newcommand{\ra}{{\rangle}}
\newcommand{\scrX}{\scr{X}}
\newcommand{\scrV}{\scr{V}}
\newcommand{\Sgm}{\Sigma}
\newcommand{\tht}{{\theta}}
\newcommand{\vol}{{\rm{vol}}}
\newcommand{\ve}{\varepsilon}
\newcommand{\vp}{\varphi}
\numberwithin{equation}{section}
\begin{document}
\title[Gauss-Kronecker Curvature and equisingularity]{Gauss-Kronecker Curvature and equisingularity at infinity of definable families}

\author[N. Dutertre]{Nicolas Dutertre}
\author[V. Grandjean]{Vincent Grandjean}
%
\address{Laboratoire angevin de recherche en math\'ematiques, LAREMA, UMR6093, CNRS, UNIV. Angers, SFR MathStic, 2 Bd Lavoisier 49045 Angers Cedex 01, France.}
\email{nicolas.dutertre@univ-angers.fr}
\address{Departamento de Matem\'atica, Universidade Federal do Cear\'a
(UFC), Campus do Pici, Bloco 914, Cep. 60455-760. Fortaleza-Ce,
Brasil}
\email{vgrandjean@mat.ufc.br}
\thanks{{$ $ \\Both authors were partially supported by the ANR project LISA 17-CE400023-01}\\
Vincent Grandjean was supported by CNPq-Brazil grant 150555/2011-3 and FUNCAP/CAPES/CNPq-Brazil grant 305614/2015-0}
%
\subjclass[2010]{Primary 14P10, Secondary 57R70 03C64}
\date{\today}
%
%
\keywords{Gauss-Kronecker curvature, total curvatures, generalized critical values, definable families}
\begin{abstract}
{Assume given a polynomially bounded o-minimal structure expanding the real numbers. 
Let $(T_s)_{s\in \R}$ be a globally definable family of $C^2$-hypersurfaces of $\R^n$. Upon defining 
the notion of generalized critical value for such a family, we show that
the functions $s \to |K(s)|$ and $s\to K(s)$, respectively the total absolute Gauss-Kronecker
and total Gauss-Kronecker curvature of $T_s$, are continuous in any neighbourhood of
any value which is not generalized critical. In particular this provides a necessary criterion 
of equisingularity for the family of the levels of a real polynomial.}
\end{abstract}
\maketitle
%
%
%
%
%

%
%
%
%
%
%
%
%
%
%
%
%
%
%
%
%
%
%
%
%
%
%
%
\section{Introduction}

One of the main goal of equisingularity theory (of families of subsets, functions, mappings) 
is to find relations between numerical data and regularity conditions. 
In the local complex analytic case, this subject has been widely studied since the end of the 60's and many interesting 
results, some of them now classical, have been established. For example, Hironaka \cite{Hironaka70} proved that the 
multiplicity is constant along the strata of a Whitney stratification of a complex analytic set. In \cite{TeissierCargese73} 
Teissier proved that a $\mu^*$-constant family of hypersurfaces with isolated singularities is Whitney equisingular. 
The reverse implication was proved later by Brian\c con and Speder \cite{BrianconSpederFourier76}. These results were 
extended to the case of {\em ICIS} by Gaffney \cite{GaffneyInvMath96}. Maybe the most important result of local 
complex analytic equisingularity theory is Teissier's polar equimultiplicity theorem \cite{TeissierLaRabida}, which 
states that Whitney regularity is equivalent to constancy of polar multiplicities. Teissier's results were refined and 
extended by Gaffney  \cite{GaffneyTopology93} to obtain sufficient conditions for equisingularity of a family of mappings.

When one considers global equisingularity problems, the first natural family to study is the family of fibres of a 
polynomial mapping. Following \cite{Th}, a polynomial function 
from $\K^n$ to $\K$, for $\K = \R$ or $\C$, is a smooth locally 
trivial fibration above the connected components of the complement of a (minimal) finite subset $B(f)$ of $\K$, 
called the set of bifurcation values of $f$. In the complex plane case, H\`a and L\^e \cite{HaLe} gave the following 
numerical criterion to characterize bifurcation values: \em A value $c$ does not lie in $B(f)$ if and only if the Euler 
characteristic of the fibres of $f$ is constant in a neighborhood of $c$.  \em This result was generalized by Parusi\'nski 
\cite{Par} to the case of complex polynomials with isolated singularities at infinity, and then by Siersma and Tib\u{a}r 
\cite{SiTi1} to the case of complex polynomials with isolated $\mathcal{W}$-singularities at infinity. In \cite{Tib1} 
Tib\u{a}r studies the more general situation of a $1$-parameter family of complex hypersurfaces, and proves a global 
version of the results of Teissier and Brian\c con and Speder mentioned above: Considering a family of complex affine 
hypersurfaces $X_\tau = \{ x \in \mathbb{C}^n \ : \ F(\tau,x)=0 \}$ given by a polynomial function 
$F : \mathbb{C} \times \mathbb{C}^n \mapsto \mathbb{C}$, he defines the notion of $t$-equisingularity at infinity and 
proves, under some additional conditions, that $t$-equisingularity at infinity is controlled by the constancy of a finite 
sequence of numbers, called the generic polar intersection multiplicities. As a consequence, if the family consists 
of non-singular affine hypersurfaces, then the constancy of the generic polar intersection multiplicities at $\tau_0$ 
implies that the family is $C^\infty$ trivial at $\tau_0$.

In the real semi-algebraic/sub-analytic setting (or more generally in the definable setting), it is hopeless to expect 
that \em constancy of numerical data is equivalent to regularity conditions. \em First, because of 
lack of connectivity, one cannot define invariants like the $\mu^*$-sequence, polar multiplicities or generic polar 
intersection multiplicities. However, using arguments from differential topology and integral geometry, one sees that 
these invariants admit geometric characterizations that still make sense in the real case. For instance, the multiplicity 
of a complex analytic germ is equal to its density \cite{DraperMathAnn69} and the $\mu^*$-sequence, the polar multiplicities 
and the generic polar intersection multiplicities are related to curvature integrals 
(see \cite{LangevinCommentarii79,LoeserCommentarii84,DutertreGeoDedicata2004,SiTi2}). 
Unfortunately, in the 
real situation, these geometric quantities do not belong to discrete sets and therefore, one cannot expect results 
relating their constancy to regularity conditions. It is more reasonable to study properties like continuity or Lipschitz 
continuity in the parameters of the family. The first result in this direction is due to Comte \cite{ComteAnnENS2000}, who 
established a real version of Hironaka's theorem, proving that the density is continuous along the strata of a 
$(w)$-stratification of a sub-analytic set. This result was generalized and strengthened by Valette \cite{ValetteAnnPol2008}: 
continuity of the density holds for 
$(b)$-regular stratifications and the density is Lipschitz continuous along the strata of $(w)$-stratifications.
Later Comte and Merle \cite{ComteMerleAnnENS2008} established a real version of Teissier's theorem \cite{TeissierLaRabida}. 
Using tools from integral geometry and geometric measure theory, they associated with each sub-analytic germ a sequence 
of numbers, called the local Lipschitz-Killing invariants, and showed that they are continuous along the strata of a 
$(w)$-stratification of a sub-analytic set. Recently, Nguyen and Valette \cite{NguyenValette} extended this continuity 
result to $(b)$-stratifications and moreover proved that these invariants are Lipschitz continuous along the strata of 
a $(w)$-stratification (see also the first author work \cite{DutertreAdvGeometry2018} for relations 
with the densities of polar images).

In the global real context, it is still true that the bifurcation set of a definable function from $\mathbb{R}^n$ to 
$\mathbb{R}$ is a finite set of points (see \cite{NeZa,LoZa,Tib2,Dac1}). In 
\cite{TiZa} Tib\u{a}r and Zaharia provided necessary and sufficient conditions for a real plane 
polynomial function to be locally trivial over the neighborhood of a regular value (see \cite{JoitaTibarProcRoySocEdim2017} 
for a generalization to a family of real curves). Unlike the complex case, their criterion is not only numerical 
but involves topological conditions at infinity. Later in \cite{CostedelePuenteJPAA2001}, Coste and de la Puente proved 
an equivalent version of Tib\u{a}r-Zaharia's results in terms of polar curves. Due to the links between polar curves and 
the Gauss-Kronecker curvature of the levels of a function provided by exchange formulas, it seems natural to study the 
variations of the total curvature of the levels (i.e. the integral of the Gauss-Kronecker curvature on the level) of a 
definable function, and to seek how bifurcation values interfere in these variations.

That is what the second author did in two papers. In \cite{Gra1} he considers a globally definable function 
$f : \mathbb{R}^n \mapsto \mathbb{R}$ of class at least $C^2$, and proved that the following functions:
$$ 
t \mapsto \int_{f^{-1}(t)} \kappa \, ,\;\;\hbox{ and } t \mapsto \int_{f^{-1}(t)} \vert \kappa \vert
$$
where $\kappa$ is the Gauss-Kronecker curvature, admit at most finitely many discontinuities. In \cite{Gra2} 
he proved that if the function $t \mapsto \int_{f^{-1}(t)} \vert \kappa \vert$ is continuous at a regular value $c$ which  
satisfies an extra condition, then $c$ is not a bifurcation value of $f$. He explained that for a real polynomial function 
with isolated singularities at infinity this extra condition is always satisfied, so this result 
can be interpreted as a real version of Parusi\'nski's result mentioned above. 

The aim of the present paper is to provide a kind of reverse implication of the latter mentioned result. 

\bigskip
We will work in the more general situation of a one parameter family of hypersurfaces. 
More precisely, we consider a globally definable function over an a priori given 
polynomially bounded $o$-minimal structure $F : \mathbb{R}^n \times \mathbb{R} \mapsto \mathbb{R}$ of class 
$C^{2+m}$ with non-negative integer $m$. Assuming that $0$ is a regular value of $F$, the $0$-level $M=F^{-1}(0)$ is thus 
a globally definable hypersurface in $\mathbb{R}^{n+1}$ of class $C^{2+m}$. We use the coordinates 
$(\bx,t)$ in $\mathbb{R}^n \times \mathbb{R}$ and 
we write $\bt_M : M \mapsto \mathbb{R}$, $(\bx,t) \mapsto t$ for the projection on the $t$-axis. 

For a value $c$ in $\R$, let $M_c=\bt_M^{-1}(c)$ and $T_c =\pi_M (M_c) \subset \mathbb{R}^n$, where $\pi_M$ is the 
projection from $M$ to $\mathbb{R}^n$. If $c$ is a regular value, then the hypersurface $T_c$ is oriented by $\partial_\bx F(\bx,c)$. 
Therefore, we consider the Gauss-Kronecker curvature $\kappa_c$ of $T_c$ and define two functions:
$$
c \mapsto K(c)=\int_{T_c} \kappa_c (\bp)d\bp \, , \; \; \hbox{ and } \;
c \mapsto \vert K \vert(c)=\int_{T_c} \vert  \kappa_c \vert (\bp) d\bp .
$$
By a straightforward adaptation of the methods of \cite{Gra1}, we show that these two functions have finitely many 
discontinuities (Theorem \ref{thm:continuity-curvatures}) and in Theorem \ref{thm:main}, we give a criterion on the regular value $c$ of $\bt_M$ for the 
function $t \mapsto \vert K \vert (t)$ to be continuous at $c$. Namely, we prove 

\smallskip\noindent
{\bf Theorem \ref{thm:main}}. \em
Let $c$ be a regular value taken by $\bt_M$ at which it is horizontally spherical at infinity. Then the total absolute curvature function 
$t \mapsto |K|(t)$ is continuous at $c$. Consequently the total curvature function $t \mapsto K(t)$ is continuous at $c$.
\em

\smallskip\noindent
The notion of  {\em horizontally sphericalness at infinity} is a regularity condition at infinity:
A regular value $c$ of ${\bf t}_M$ is horizontally spherical at infinity 
if for any sequence $({\bf p}_k)_{k \in \mathbb{N}}$ of $M$ converging at infinity to $({\bf u},c)$, ${\bf u}$ is orthogonal 
to the limit of the unitary gradients $\frac{\nabla {\bf t}_M}{\vert \nabla {\bf t}_M \vert} ({\bf p}_k)$.
A key ingredient of the proof of our main result, Theorem \ref{thm:main} is Lemma \ref{lem:dim-n-2} stating, informally, that
\em under these hypotheses there no accumulation of curvature at infinity nearby the level $c$. \em

We also prove that $(t)$-equisingularity at infinity implies horizontal sphericalness (Corollary \ref{cor:t-reg}). 
Therefore Theorem \ref{thm:main} shows that $(t)$-equisingularity at infinity implies continuity of the function $t \mapsto \vert K \vert (t)$. 
This can be considered as a first step towards a real version of Tib\u{a}r's result \cite{Tib1} mentioned above.

To be complete, we show here more than Theorem \ref{thm:main}. Its conclusion also holds true in any connected component of the pencil
of levels over a small interval of regular values $]c-\ve,c+\ve[$ (see Theorem \ref{thm:main-connected}). In other words
the connected components of the pencil of levels cannot compensate altogether the a priori possible discontinuities of some.

\medskip
The paper is organized as follows. Section \ref{section:misc} contains material on compactifications, $o$-minimal structures and Thom's 
$(a_f)$ condition. In Section \ref{section:t-reg}, we recall some facts about conormal geometry so that
we can introduce the notion of $t$-equisingularity. Sections  \ref{section:regularity}, \ref{section:comparing} and \ref{section:more} 
contain definitions and new results on regularity at infinity of globally definable $C^{2+m}$-families of hypersurfaces. 
In Section \ref{section:curvature}, we generalize the results of \cite{Gra1} to our situation. Section \ref{section:main} 
contains the proof of the main result. Section 9 deals with the particular case of the levels of a function. 

\medskip
\noindent
{\bf Acknowledgments.} The authors are very grateful to Si Tiep Dinh for useful, fruitful and inspiring conversations.
The second author would like to thank the I2M and  LAREMA for their working conditions while visiting the first author.
%
%
%
%
%
%
%
%
%
%
%
%
%
%
%
%
%
%
%
%
%
%
%
%
%
%
%
\section{Miscellaneous material}\label{section:misc}
Let $\Rn$ be the Euclidean space of positive dimension $n$.

Let $\la -, - \ra$ be the associated scalar product.
For any point $\bx$ of $\Rn$, let $|\bx|$ be the norm $\sqrt{\la \bx,\bx\ra}$ of $\bx$.

Let $\bS_R^{q-1}$  be the Euclidean sphere of $\R^q$ of center the origin and positive radius $R$.

Let $\bB_R^q$  be the closed Euclidean ball of $\R^q$ of center the origin and positive radius $R$.
When $q$ is understood we will only write $\bB_R$.

Let $\clos(-)$ denote the operation "taking the closure of" in $\Rn$. 
Each Euclidean space $\Rn$ embeds semi-algebraically in the closed unit-ball $\bB_1^n$, as its interior
via the mapping $\bx \mapsto \frac{\bx}{\sqrt{1+|\bx|^2}}$. We may then speak of $\bB_1^n$ as the \em spherical 
compactification of $\Rn$ \em (see the next section).

\bigskip
Let $\cM$ be an o-minimal structure expanding the real field $\R$. Assume it is polynomially bounded
and let $\F_\cM$ be the field of its exponents (\cite{vdDM,vdD}).
Any subset of any $\R^p$ definable in $\cM$ will be called below \em definable. \em

Usually globally definable subsets of $\cM$ are defined as definable subsets of any closed unit Euclidean ball.
 
Since each $\Rn$ embeds semi-algebraically in the closed unit-ball $\bB_1^n$, 
a subset $X$ of $\Rn$ is \em globally definable \em if it is definable
in the spherical compactification of $\Rn$.

Let $X$ be a subset of $\Rn$. A mapping $f:X\mapsto\R^p$ is globally definable if its graph is globally definable
in $\R^{n+p}$.

We would like to remind the following fact (see \cite{Dac1}): \em Let $\gm : [1,+\infty[\mapsto \R^n$ be a $C^1$ globally definable 
arc such that $\gm(t) \mapsto \infty$ as $t$ goes to $+\infty$. Then there exists a unit vector $\bu$ of $\bS^{n-1}$ 
such that
$$
\lim_\infty \frac{\gm}{|\gm|} = \bu = \lim_\infty \frac{\gm'}{|\gm'|} .
$$
\em

Let $f : (\R_{\geq 1},+\infty) \to \R$ be the germ at $\infty$ of a continuous globally definable function.
We write $f \sim t^e$ for an exponent $e$ in $\F_\cM\cup\{-\infty\}$, with the convention that $t^{-\infty} =0$ for large $t$, to mean
$$
f \sim t^e \Longleftrightarrow \lim_{t\to +\infty} \frac{f(t)}{t^e} \in \R^* \,.
$$
Note that there always exists such an exponent $e$.

\bigskip
Let $\bG(p,n)$ be the Grassmann manifold of $p$-vector subspaces of $\Rn$. 
We denote $\bG^\vee(p,n)$ the space of $p$-vector subspaces of the space $L(\Rn,\R)$ of linear forms over $\Rn$, and
we will call it sometimes the \em dual \em of $\bG(n-p,n)$.

\bigskip
We recall \em Thom's condition \em (or relative Whitney's condition $(a)$).

Let $X,Y$ be two connected $C^1$ submanifolds of a definable compactification of $\Rn$, such that 
$Y$ is contained in $\clos(X)\setminus X$. Let $g:(X\sqcup Y) \mapsto \R$ be a $C^1$ mapping, for
$X\sqcup Y$ the disjoint union of $X$ and $Y$. Let $\by$ be a point of $Y$. 

The function $g$ satisfies \em Thom $(a_g)$-condition at $\by$ \em if the following two conditions hold:
\\
(i) For any sequence $(\bx_k)_k$ of points of $X$ converging to $\by$ such that the sequence $(T_{\bx_k} X)_k$ converges to $T$
in the appropriate Grassmann bundle, then $T_\by Y$ is contained in $T$;
\\
(ii) For any sequence $(\bx_k)_k$ of points of $X$ converging to $\by$, such that the sequence $(T_{\bx_k} X)_k$ converges to $T$
which contains $T_\by Y$ and the sequence $(\ker \ud_{\bx_k} g)_k$ converges to 
$K$ in the appropriate Grassmann bundle, then $\ker \ud_\by g$ is contained in $K$.

\smallskip
In practice we want to stratify $g$ with Thom's condition asking that the strata $Y$ is contained in some specified
level of $g$.
%
%
%
%
%
%
%
%
%
%
%
%
%
%
%
%
%
%
%
%
%
%
%
%
%
%
%
%
%
\section{compactification and conormal geometry and $t$-equisingularity at infinity}\label{section:t-reg}
Let $\bbo$ be the origin of $\Rn$. 

As already seen in the previous section, we can compactify $\Rn$ as the closed unit ball $\bB_1^n$.

\medskip
An alternative presentation to the spherical compactification is
the spherical blowing-up $\bbl_\infty$ of $\Rn$ at infinity, that is the mapping given by 
$$
\begin{array}{rccl}
\bbl_\infty \; : & \bS^{n-1} \times ]0,+\infty[ & \mapsto & \Rn \setminus \{\bbo\} \\
 & (\bu,r) & \mapsto  & \frac{\bu}{r}
\end{array}
$$
It is a Nash diffeomorphism and a re-parametrization of $\Rn\setminus\{\bbo\}$ embedded in $\bB_1^n$.
It is more convenient to look at it this way since it is a good real avatar of the projective compactification 
$\bP\Rn$ (which in our globally definable context is not as relevant as in the algebraic case).
\\
We denote by $\bS_\infty^{n-1} := \bS^{n-1}\times 0$ the sphere at infinity. Let us denote and identify
$$
\Rnbar :=\Rn\sqcup \bS_\infty^{n-1} = (\bS^{n-1} \times [0,+\infty[)\sqcup \bbo,
$$
the spherical compactification of $\Rn$ at infinity, with boundary $\partial\Rnbar := \bS_\infty^{n-1}$, the sphere at infinity.

\medskip
Let $\overline{Z}$ be the closure of the subset $Z$ of $\Rn$ taken into $\Rnbar$.
The \em tangent link of $Z$ at $\infty$ \em is defined as
$$
Z^\infty := \overline{Z} \cap \bS_\infty^{n-1}.
$$
The \em tangent cone of $Z$ at infinity $C_\infty (Z)$ \em is defined as the (non-negative) cone
over $Z^\infty$. Whence $Z^\infty$ is not empty (equivalently $Z$ is not bounded) we also observe that
$$
Z^\infty :=\clos\left\{\bu\in\bS^{n-1} \,:\, \exists Z \, \ni (x_k)_k \to \infty  \hbox{ such that } \frac{x_k}{|x_k|} \to \bu\right\}.
$$

For any definable subset $Z$ of $\Rn$, it is globally definable if, by definition, $\overline{Z}$ is definable 
in $\Rnbar$. Thus whenever a subset $Z$ of $\Rn$ is globally definable, \em its tangent link at 
infinity $Z^\infty$ \em is definable and of dimension at most $\dim Z - 1$.

\medskip
Although heavy to define it is convenient to use the formalism of conormal geometry. We are especially interested in 
conormal geometry at infinity. 

Now let $Z=\{ (\bx,t) \in \Rn \times \R \, : \, G(\bx,t)=0 \}$, where $G : \Rn \times \R \mapsto \R$ is a 
globally definable function of class at least $C^2$, and let $\overline{Z}$ be its closure in $\overline{\Rn} \times \R$. 

We assume that $0$ is a regular value of $G$ and we consider $Z$ as a definable family $\{Z_t\}_{t \in \R}$ of 
hypersurfaces in $\Rn$. Let $g : Z \to \R$ be a definable function which we assume to be $C^1$. For any regular 
point $(\bx,t)$ of the function $g$, let $T_{(\bx,t)} g $ be the subspace of $T_{(\bx,t)} Z$ tangent at $(\bx,t)$ 
to the level of $g$ through $(\bx,t)$. Let us define the following subset of $\overline{\Rn} \times \R \times \bG^\vee(1,n+1)$:
$$
\scrX_g^\vee :=\clos\{(\bx,t,\xi) \in Z\setminus  \crit(g) \times \bG^\vee(1,n+1) \, : \, 
\xi (T_{(\bx,t)} g) = 0\},
$$
where $\bG^\vee(1,n+1)$ is the dual of $\bG(1,n+1)$.

\begin{definition}
The \em relative conormal space of $g$ \em is the space $\scrX_g^\vee$.
\end{definition}

Let $\pi_{n+1} : \overline{\Rn} \times \R \times \bG^\vee (1,n+1) \mapsto \overline{\Rn} \times \R$ 
be the projection given as $\pi_{n+1}(\bx,t,\xi)=(\bx,t)$. 
\begin{definition}
The \em relative conormal space of $g$ at infinity \em is the space $\scrX_g^\infty$ defined as 
$$
\scrX_g^\infty := \pi_{n+1}^{-1}(Z^\infty) \cap \scrX_g^\vee,
$$
where $Z^\infty =\overline{Z} \cap ( \bS_\infty^{n-1} \times \R)$.

For any $\bp \in Z^\infty$, let $(\scrX_g^\infty)_\bp$ be the fibre of $\scrX_g^\vee$ above $p$, that is 
$(\scrX_g^\infty)_\bp = \pi_{n+1}^{-1}(\bp) \cap \scrX_g^\vee$.
\end{definition}

\medskip
We introduce now the notion of $t$-equisingularity \cite{Tib1} adapted to the context of Section \ref{section:regularity}.

\smallskip

Let $\br_Z:Z \mapsto\R$ be defined as $(\bx,t) \mapsto |\bx|$. It is continuous globally definable and
$C^1$ outside $0\times\R \cap Z$. 

The \em space of characteristic covectors $\scr{C}$ of $Z$ at infinity \em is the subset of 
$\Rnbar\times \Rn \times \bG^\vee(1,n+1)$ defined as 
$$
\scr{C}(Z) := \scrX_{\br_Z}^\infty.
$$
It is closed and definable.

Let $\tau:\Rnbar\times\R$ be defined as $(\bx,t) \mapsto t$. 

The following notion is due to Tib\u{a}r \cite{Tib1}:
\begin{definition}\label{def:t-reg}
Let $\bp $ in $Z^\infty$. 

(i) The family $\{Z_t\}_{t \in \R}$ is $t$-equisingular at $\bp$ if 
$$
\scr{C}(Z)_\bp \cap (\scrX_\tau^\infty)_\bp = \emptyset.
$$

(ii) The family $\{Z_t\}_{t \in \R}$ is $t$-equisingular at infinity at $c$ if it is
$t$-equisingular at $\bp$ for all $\bp $ in $Z^\infty \cap \tau^{-1}(c)$.
\end{definition}
The definition above is slightly different from those given in \cite{SiTi1,Tib1,DRT}, since 
there it is given via the projective compactification of $\Rn$. Anyhow they are equivalent.

\smallskip
Any co-vector in $(\scrX_\tau^\infty)_\bp$ has kernel the horizontal 
hyperplane $\Rn\times 0$. We deduce that any limit of tangent spaces $T$ of $\br_Z$ at 
$\bp$ does not lie in $\Rn\times 0$ whenever $\{Z_t\}_{t \in \R}$ is $t$-equisingular at $\bp$.
%
%
%
%
%
%
%
%
%
%
%
%
%
%
%
%
%
%
%
%
%
%
%
%
%
%
%
%
%
\section{Regularities at infinity for definable families of hypersurfaces}\label{section:regularity}
We present here two regularity conditions at infinity for the function restriction of a coordinate projection 
along a globally definable one parameter family of hypersurfaces. In the next section, we will 
compare altogether these regularity conditions with $t$-equisingularity, introduced in the previous section.

\bigskip
Let $F:\R^n_\bx\times \R_t \mapsto \R$ be a $C^{2+m}$ globally definable function, for some non-negative integer $m$.

Assuming that $\Rn\times\R$ is equipped with the canonical Euclidean structure, 
let $\nF$ be the gradient field of $F$. Without further hypotheses, the real number $0$ may be a critical value of $F$, 
and $\nF$ may be vanishing on the zero level of $F$.

\medskip\noindent
{\bf Working Hypothesis:} \em $0$ is a regular value of $F$. \em

\smallskip
Let $M$ be the zero locus $F^{-1}(0)$ of the function $F$, which is a closed globally definable subset of $\R^{n+1}$
and a $C^{2+m}$ hypersurface. Let $\Mbar$ be its closure in $\Rnbar\times\R$.

We define two mappings $(\pi,\bt):\Rnbar\times\R \mapsto \Rnbar\times\R$ obtained respectively as the restrictions
of the projection over $\Rnbar$ and over $\R$, and both are semi-algebraic.

Let $\bt_M$ be $\bt|_M$ the restriction of $\bt$ to $M$ and let $\pi_M$ be the restriction of $\pi$ to $M$, both are
$C^{2+m}$ and globally definable mappings. 
Let us write $M_c := \bt_M^{-1}(c)$ and $T_c := \pi_M(M_c)$ subset of $\Rn$. 
\begin{definition}\label{def:trivial}
Let $c$ be a value taken by $\bt_M$. The function $\bt_M$ is said locally $C^k$ trivial at $c$ if 
there exists a positive real number $\ve$ such that $\bt_M^{-1}(]c-\ve,c+\ve[)$ is a trivial $C^k$-bundle 
with fibre $M_c$.
\end{definition}
Mimicking what was done for level hypersurfaces of functions \cite{LoZa,TiZa,Dac1,DaGr1,DaGr2,Gra2}, sufficient conditions 
about the gradient of $\bt_M$ guarantee trivialization (see below).
Since $M$ is globally definable and each of its connected component is orientable, let $\nu_M$ be a $C^{1+m}$ globally
definable unitary field normal to $M$. Since $0$ is not a critical value of $F$, we choose 
$$
\nu_M := \frac{\nF}{|\nF|} = \nmbx + \nmt \dt
$$ 
where $\nu_M^\bx$ is the component of $\nu_M$ in $\Rn \times 0$, 
and writing 
$\nF = \dbx F + \dt F\dt$, where $\dbx F$ lies in $\Rn\times 0$.

Let $\bp = (\bx,t)$ be a point of $M$. We have
$$
T_\bp M = \{(\bu,w) \in \Rn\times \R \, : \, \la \dbx F, \bu \ra + \dt F\cdot w = 0\}.
$$
It is easy to prove the following relation:
$$
\nbt_M = - \,\frac{\dt F}{|\nF|^2} \dbx F +  \frac{|\dbx F|^2}{|\nF|^2} \dt = -  \nmt\nmbx + |\nmbx|^2\dt
$$
and thus 
$$
|\nbt_M| = \frac{|\dbx F|}{|\nF|} = |\nmbx|.
$$
The critical locus of $\bt_M$ is 
$$
\crit(\bt_M) = \{\bp \in M \, : \, \dbx F(\bp) = 0\}.
$$
Since $M$ is a $C^{2+m}$ orientable hypersurface, the function $\bt_M$ is $C^{2+m}$ as well.
Since it is globally definable, the set of its critical values $K_0(\bt_M) := \bt_M(\crit(\bt_M))$ 
is finite. 

Let $\nubtm : M\setminus \crit(\bt_M)\mapsto \bS^n$ be the unitary gradient of $\nbt_M$,
$$
\nubtm := \frac{\nbt_M}{|\nbt_M|}.
$$

\medskip
The Local Conical Structure Theorem ensures the existence of a positive number $S_M$ such that 
for any $S > S_M$ the hypersurface $M$ is transverse with $\bS_S^n$, the Euclidean sphere of radius $S$.
As a consequence of this fact we also have:
\begin{lemma}\label{lem:transverse-horizontal-sphere}
For any $A > max_{c\in K_0(\bt_M)}|c|$, there exists $R_A$ such that for any $R> R_A$
the globally definable $C^{2+m}$ hypersurface $M\cap (\Rn\times ]-A,A[)$ is transverse to the cylinder $\bS_R^{n-1}\times \R$.
\end{lemma}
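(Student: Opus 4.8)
The plan is to reduce the statement to the finiteness of the critical values of a single globally definable function, in the same spirit as the quoted transversality of $M$ with large Euclidean spheres. I would introduce the function $\br_M:M\setminus(\{\bbo\}\times\R)\mapsto\R$, the restriction to $M$ of $(\bx,t)\mapsto|\bx|$ (the distance to the $t$-axis). It is globally definable, and since $M$ is a $C^{2+m}$ hypersurface which, outside $\{\bbo\}\times\R$, avoids the singular locus of the norm, $\br_M$ is of class $C^{2+m}$. At a point $\bp=(\bx,t)$ with $\bx\neq\bbo$ the gradient in $\R^{n+1}$ of $(\bx,t)\mapsto|\bx|$ is $\bigl(\frac{\bx}{|\bx|},0\bigr)$, which is exactly a unit normal to the cylinder $\bS_{|\bx|}^{n-1}\times\R$ at $\bp$. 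As $M$ and $\bS_R^{n-1}\times\R$ both have codimension one in $\R^{n+1}$, they are transverse at a common point $\bp=(\bx,t)$ with $|\bx|=R$ if and only if they do not have the same tangent hyperplane there, that is, if and only if $\bigl(\frac{\bx}{R},0\bigr)$ is not orthogonal to $T_\bp M$ --- equivalently, if and only if $\bp$ is a regular point of $\br_M$.

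Next I would invoke the finiteness of the set $K_0(\br_M):=\br_M(\crit(\br_M))$ of critical values of the globally definable function $\br_M$ --- exactly as observed above for $\bt_M$ --- and choose $R_A$ larger than all its (finitely many) elements. Then for every $R>R_A$, each point $\bp=(\bx,t)\in M$ with $|\bx|=R$ is a regular point of $\br_M$, hence --- by the previous paragraph --- $M$ is transverse to $\bS_R^{n-1}\times\R$ at $\bp$. Since $M\cap(\Rn\times ]-A,A[)$ is an open submanifold of $M$, it has the same tangent space as $M$ at each of its points, so it too is transverse to $\bS_R^{n-1}\times\R$ all along their intersection, which is the claim.

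I do not expect any real obstacle; the one point deserving a word of care is the locus $\{\bbo\}\times\R\cap M$, on which $\br_M$ fails to be differentiable, but that locus meets no cylinder $\bS_R^{n-1}\times\R$ (a point of such a cylinder has $|\bx|=R>0$) and so is irrelevant. I would also note that neither the bounded $t$-slab nor the hypothesis $A>\max_{c\in K_0(\bt_M)}|c|$ is actually used: the argument gives transversality of the whole of $M$ with $\bS_R^{n-1}\times\R$ for all $R>R_A$, with $R_A$ independent of $A$. Should one prefer to remain inside the slab and avoid the finiteness of $K_0(\br_M)$, the same conclusion also follows by curve selection at infinity: if there were $\bp_k=(\bx_k,t_k)\in M$ with $|t_k|<A$, $|\bx_k|\to\infty$ and $\nu_M(\bp_k)\in\R\cdot(\bx_k,0)$, one could choose a $C^1$ globally definable arc $\gm(s)=(\bx(s),t(s))$ in $M$ with $t(s)\in\,]-A,A[$, $|\bx(s)|\to\infty$ and $\gm'(s)\perp(\bx(s),0)$ for large $s$, whence $\frac{\ud}{\ud s}|\bx(s)|^2=2\la\bx'(s),\bx(s)\ra=0$, forcing $|\bx(s)|$ to be eventually constant --- a contradiction.
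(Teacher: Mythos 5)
Your main argument is correct, and it takes a genuinely different route from the paper. The paper proceeds by contradiction: it introduces the non-transversality locus $\Sgm=\{(\bx,t)\in M \,:\, \bx\wedge\nF(\bx,t)=0\}$, assumes it is unbounded inside the slab $\Rn\times]-A,A[$, extracts a globally definable $C^1$ arc $\gm$ going to a point $(\bu,c)$ at infinity, and uses the secant/tangent limit fact of Section 2 to get simultaneously $\bu\wedge\nu=0$ and $\la\bu,\nu\ra=0$ for $\nu=\lim\nu_M(\gm(s))$, which is absurd. You instead observe that non-transversality at a point with $|\bx|=R$ is exactly criticality of the globally definable function $\br_M=|\bx|\,|_M$ there, and then invoke finiteness of the set of critical values of a globally definable $C^1$ function --- the same o-minimal Sard-type fact the paper itself uses to get finiteness of $K_0(\bt_M)$. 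This is more direct, and it buys a slightly stronger statement: $R_A$ can be taken independent of $A$, and the restriction to the slab (and the hypothesis $A>\max_{c\in K_0(\bt_M)}|c|$) is not needed, since all of $M$ is transverse to $\bS_R^{n-1}\times\R$ once $R$ exceeds the largest critical value of $\br_M$. Your fallback argument is essentially the paper's curve-selection proof, but with a cleaner contradiction (the derivative identity $\frac{\ud}{\ud s}|\bx(s)|^2=2\la\bx'(s),\bx(s)\ra=0$ forcing $|\bx(s)|$ constant, instead of the pair of limit conditions on $\bu$ and $\nu$); the only point to state explicitly there is that the arc must be selected inside the definable non-transversality locus (not merely in $M$), so that $(\bx(s),0)$ is normal to $M$ along the arc and hence orthogonal to $\gm'(s)$ --- as written this is implicit but clearly intended, and it is not a gap.
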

\begin{proof}
Let $A \gg 1$ be given.
Let us define the following subset
$$
\Sgm := \{(\bx,t)\in M \, : \, \bx \wedge \nF(\bx,t) =0\}.
$$
Note that $\Sgm\setminus \bbo \times \R$ is contained in $M \cap\{\dt F = 0\}$
and that $\Sgm$ is a closed globally definable subset of $M$.

\smallskip
Let us assume that the statement of the lemma is not true. Thus there exists a $C^1$ globally definable 
path $\gm:[1,+\infty[ \mapsto \Sgm \cap \Rn\times ]-A,A[$ such that 
$\gm(s) \to (\bu,c)$ in $M^\infty$ as $s$ goes to $+\infty$, with $|c| < A$.
\\
We can parameterize $\gm$ in such a way that $|\gm (s)| = s$, which gives the following
$$
\gm(s) = (s\bu,0) + s^e \bv(s)
$$
for a $C^1$ and globally definable mapping $s\mapsto s^e\bv(s) \in \Rn\times\R$ such that $\lim_\infty \bv \neq (0,0)$ and
$e< 1$. We also have that $s\mapsto \nu_M (s) = \frac{\nF}{|\nF|}(\gm(s))$ goes to $\nu$ in $\bS^n$ as $s$ goes to $\infty$. 
Note that $\frac{\gm'(s)}{\vert \gm'(s) \vert}$ goes to $(\bu,0)$ as $s$ goes to $+\infty$. Since 
$$
\gm(s) \wedge \nu_M(s) = 0 \; \mbox{ and } \; \la \gm'(s), \nu_M(s) \ra = 0\, ,
$$
we deduce that
$$
\bu \wedge \nu = 0 \; \mbox{ and } \; \la \bu,\nu\ra = 0,
$$
which is absurd.
\end{proof}

\medskip
We can introduce now the Malgrange regularity condition at infinity.
\begin{definition}\label{def:malgrange-acv}
Let $c\in\R$ be a value. 

\smallskip\noindent
(i) The function $\bt_M$ satisfies the Malgrange condition at $c$ if 
there exist positive constants $R,\ve,A_c$ such that 
\begin{equation}\label{eq:malgrange}
|\bx| > R, \; |t - c| < \ve  \Longrightarrow |\bx|\cdot |\nbt_M(\bx,t)| \geq A_c
\end{equation}
which is equivalent to 
\begin{equation}\label{eq:malgrange-bis}
|\bx| > R, \; |t - c| < \ve  \Longrightarrow |\bx|\cdot |\dbx F| \geq A_c |\nF| . 
\end{equation}

\smallskip\noindent
(ii) A value $c$ which is not satisfying the Malgrange condition is called \em 
an asymptotic critical value \em (ACV for short). Let $K_\infty(\bt_M)$ be the set 
of ACV of $\bt_M$.
\end{definition}

\smallskip
Similarly to the case of real or complex polynomial families \cite{Par,Tib1,Tib2,TiZa} we find
\begin{theorem}[see also \cite{LoZa,Kur,Dac1,DaGr1}]
(i) There exists a finite subset $B(\bt_M)$ of $\R$ such that the function $\bt_M$ is a locally $C^{1+m}$ trivial
at any value $c$ not lying in $B(\bt_M)$.

\smallskip\noindent
(ii) $B(\bt_M) \subset K_0(\bt_M) \cup K_\infty(\bt_M)$.

\smallskip\noindent
(iii) $K_\infty(\bt_M)$ is finite.

\smallskip\noindent
(iv) If $c$ is a regular value taken by $\bt_M$ and does not lie in $K_\infty(\bt_M)$, 
the local trivialization can be realized by a vector field colinear to $\nubtm$.
\end{theorem}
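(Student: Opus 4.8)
The plan is to prove the four statements of this theorem about the existence of a bifurcation set and trivialization for the projection $\bt_M$, following the now-standard strategy used for polynomial and definable function families (as in \cite{LoZa,Kur,Dac1,DaGr1}), adapted to the hypersurface-family setting. The core idea is that away from critical values and asymptotic critical values, the gradient field $\nbt_M$ is ``uniformly non-degenerate'' both on compact sets and near infinity, so one can integrate a suitable flow to produce the trivialization.

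**First**, I would establish (iii), the finiteness of $K_\infty(\bt_M)$. The function $c \mapsto \inf\{|\bx|\cdot|\nbt_M(\bx,t)| : |\bx| \ge R, |t-c|<\ve\}$, or more precisely the definable ``distance-to-failure'' function built from \eqref{eq:malgrange}, is a globally definable function of $c$; by o-minimality and polynomial boundedness, the set of $c$ where no choice of $R,\ve,A_c$ works is definable of dimension $0$ in $\R$, hence finite. Concretely, one shows that $c \in K_\infty(\bt_M)$ forces the existence of a $C^1$ globally definable arc $\gm(s) \to \infty$ with $\bt_M(\gm(s)) \to c$ and $|\gm(s)|\cdot|\nbt_M(\gm(s))| \to 0$; the curve selection lemma in the o-minimal setting produces such arcs, and the set of limit values $c$ obtained this way is the image under a definable map of a $1$-dimensional definable set, hence finite. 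This is the step where polynomial boundedness is genuinely used (to control the order of vanishing via the $f \sim t^e$ formalism introduced in Section~\ref{section:misc}).

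**Next**, for (i), (ii) and (iv), I would fix a value $c \notin K_0(\bt_M) \cup K_\infty(\bt_M)$ which is in the image of $\bt_M$, and construct the trivializing vector field. Choose $\ve$ so small that $]c-\ve,c+\ve[ \cap (K_0(\bt_M)\cup K_\infty(\bt_M)) = \emptyset$ and the constants $R, A_c$ of Definition~\ref{def:malgrange-acv} are valid on the whole interval (shrinking $\ve$ and enlarging $R$ as needed, using the finiteness just proved so that one uniform choice works). On the region $M \cap \{|\bx| > R\} \cap \bt_M^{-1}(]c-\ve,c+\ve[)$ the vector $\nubtm$ is well-defined and, by \eqref{eq:malgrange}, $\la \nubtm, \nbt_M \ra = |\nbt_M| \ge A_c/|\bx|$, so the normalized field $w := \nubtm / |\nbt_M|^2$ satisfies $\la w, \nbt_M\ra = 1$ — its flow moves between levels at unit speed in $t$. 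On the compact part $M \cap \{|\bx| \le R'\}$ for suitable $R'>R$, there are no critical points of $\bt_M$ over $]c-\ve,c+\ve[$, so $\nbt_M$ is nowhere zero there and the same normalization works; a partition of unity (or rather: the Malgrange estimate combined with Lemma~\ref{lem:transverse-horizontal-sphere} lets one patch the ``near infinity'' field and the ``compact'' field without creating zeros) yields a global $C^{1+m}$ globally definable vector field $w$ on $\bt_M^{-1}(]c-\ve,c+\ve[)$ with $\la w,\nbt_M\ra \equiv 1$. The key point for completeness of the flow is that the Malgrange condition prevents trajectories from escaping to infinity in finite time: along a trajectory, $\frac{\ud}{\ud s}|\bx(s)|$ is controlled because $|w| \le |\bx|/A_c$, giving a linear-in-$|\bx|$ bound so that $|\bx(s)|$ cannot blow up in finite $s$. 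Integrating $w$ then gives the $C^{1+m}$ trivialization of $\bt_M$ over $]c-\ve,c+\ve[$ with fibre $M_c$, proving (i) with $B(\bt_M)$ the (finite) set of remaining bad values, (ii) by construction of that set, and (iv) since the field $w$ was taken colinear to $\nubtm$.

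**The main obstacle** I anticipate is the patching near infinity and the proof that the flow is complete — i.e.\ that no trajectory of the trivializing field reaches the boundary $\bS_\infty^{n-1}\times\R$ in finite time. This is exactly where the Malgrange (asymptotic) condition is indispensable, and it must be combined carefully with the transversality-to-cylinders statement of Lemma~\ref{lem:transverse-horizontal-sphere} to ensure that the level sets $T_c$ themselves behave well at infinity (so that the ``fibre $M_c$'' in Definition~\ref{def:trivial} makes sense as a manifold-with-appropriate-behavior). A secondary technical point is the regularity bookkeeping: since $F$ is $C^{2+m}$, the field $\nubtm$ is only $C^{1+m}$, which is why the theorem claims $C^{1+m}$ rather than $C^{2+m}$ trivialization; one must check that integrating a $C^{1+m}$ definable vector field yields a $C^{1+m}$ (not merely $C^1$) trivialization, which follows from the standard dependence-on-parameters theory for ODEs together with definability. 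I would carry out the finiteness argument (iii) first, since it is used to make the choice of $\ve, R, A_c$ uniform in the trivialization argument, and then close with the construction and flow estimate.
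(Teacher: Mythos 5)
Your treatment of (iv), and hence of (i)--(ii) modulo finiteness, is essentially the paper's: one integrates the field $\chi=\nbt_M/|\nbt_M|^2$ and uses the Malgrange inequality \eqref{eq:malgrange} together with Gronwall's lemma to show that trajectories starting on $M_c$ stay in a bounded region for time $|s|\le\ve$, which gives the $C^{1+m}$ trivialization by a field colinear to $\nubtm$. Two small points there: your field $w=\nubtm/|\nbt_M|^2$ satisfies $\la w,\nbt_M\ra=1/|\nbt_M|$, not $1$; you want $\nubtm/|\nbt_M|=\nbt_M/|\nbt_M|^2$ (with your literal $w$ one only gets $|w|\le|\bx|^2/A_c^2$, which does not rule out finite-time blow-up). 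Also no patching or partition of unity is needed: since $c\notin K_0(\bt_M)$ and $K_0(\bt_M)$ is finite, $\nbt_M$ is nowhere zero on the whole slab $\bt_M^{-1}(]c-\ve,c+\ve[)$ for $\ve$ small, and the paper simply normalizes the Malgrange constant so that $|\bx|\cdot|\nbt_M|\ge A_\ve$ holds on all of this slab.

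The genuine gap is in (iii). You assert that the set of values where the Malgrange condition fails ``is definable of dimension $0$ in $\R$, hence finite'', but dimension $0$ is exactly what must be proved: $K_\infty(\bt_M)$ is indeed a definable subset of $\R$, so by o-minimality it is a finite union of points and intervals, and the entire content of (iii) is to exclude intervals. Your fallback argument --- curve selection produces, for each $c\in K_\infty(\bt_M)$, a definable arc witnessing asymptotic criticality, and ``the set of limit values $c$ obtained this way is the image under a definable map of a $1$-dimensional definable set, hence finite'' --- does not work: the image of a $1$-dimensional definable set under a definable map into $\R$ can perfectly well be an interval, and in any case the witnessing arcs vary with $c$, so there is no single $1$-dimensional set whose image is $K_\infty(\bt_M)$. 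The paper, following \cite{Dac1}, closes this by a genuinely quantitative argument which your sketch omits: polynomial boundedness supplies a definable $\rho$ with $R^{-1}\rho(R)\to+\infty$ and $K_\infty(\bt_M)=K_\infty^\rho(\bt_M)$; assuming an interval $[c,c']$ of asymptotic critical values, one finds a definable arc going to infinity inside $\Sgm=\{\rho(|\bx|)\cdot|\nbt_M|<\bt_M-c\}$, derives $0<-h'\le 2h/\rho$ for $h=\bt_M\circ\gm$, and Gronwall then yields a bound contradicting $\lim_\infty u=2h(R_0)$ once $R_0$ is chosen with $\lambda(R_0)<2$. Without an argument of this type, (iii) is unproven, and with it the finiteness of $B(\bt_M)$ claimed in (i); your passing remark that polynomial boundedness ``controls the order of vanishing via the $f\sim t^e$ formalism'' does not substitute for this step.
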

\begin{proof}
We are going to sketch the proofs of (iii) following \cite{Dac1} and (iv) following \cite{Dac1,Dac2,DaGr1}.
Both (i) and (ii) can be deduced from these two points.

\smallskip
For simplicity we write $f$ for $\bt_M$.

\smallskip
Since $\cM$ is polynomially bounded, there exists a globally definable function $\rho :[1,+\infty[ \mapsto \R_+$ 
such that (see \cite[Lemma 3.3]{Dac1}):

\smallskip
\begin{center}
(1) $\lim_{R\to \infty} \, R^{-1} \rho(R) = +\infty\;\;\;$ and $\;\;\;\;$ (2) $K_\infty (f) = K_\infty^{\rho}(f)$
\end{center}
where 
$$
K_\infty^{\rho}(f) := \{c\in \R: \exists (\bx_k,t_k)\in M\, , \, \bx_k\to \infty \mbox{ and } t_k \to c \mbox{ such that }
\rho(|\bx_k|)\cdot|\nf(\bx_k,t_k)| \to 0\} .
$$
In particular for $R$ large enough there exists an exponent $\alpha$ of $(\F_\cM)_{>1}$ such that $\rho(R) \geq R^\alpha$.

\smallskip
Following the steps of \cite[Theorem 3.4]{Dac1}, we show that $K_\infty^\rho (f)$ is finite.

Assume that there exists $c'>c$ such that for each $c\leq t \leq c'$ the level $\{f=t\}$ is neither empty
nor is a critical level. Let us consider the following subset
$$
\Delta := \{(s,w) \in \R^2 \, : \, \forall \delta > 0 \, , \, \exists \ve > 0, \exists R > 0 \, : \,
|\bx| > R , |\bt_M(x) -w|< \ve \Longrightarrow |\bx|\cdot|\nbt_M (\bx)| - s|<\delta\}.
$$
This subset is globally definable. Let $\tht : [c,c'] \mapsto [0,+\infty[$ be the function defined as follows
$$
\tht (t) := \inf\{\Delta \cap \{w=t\}\}.
$$
It is globally definable. We wish to show that it vanishes only finitely many times on $[c,c']$.
Assume that $\theta$ is identically $0$ over $[c,c']$ (up to work with a smaller $c'$).
Under these hypotheses the globally definable subset 
$$
\Sgm := \{\rho(|\bx|)\cdot|\nf (\bx,t)| < f(\bx,t) -c\}
$$
is not empty outside of a compact subset of $M \cap \bt_M^{-1}[c-\ve,c+\ve]$ (see \cite[p. 40]{Dac1}). 
By definition of $\Sgm$, there exists a $C^1$ globally definable arc going to infinity 
$\gm :[1,+\infty[\mapsto \Sgm$  such that 
$$
\lim_\infty f\circ\gm = c.
$$
Let $h = f\circ\gm$, and let us parameterize $\gm$ such that $|\bx(\gm(R))| = R$, so that $|\gm'|$ goes to $1$ at infinity. We find
$$
0 < -h' \leq \frac{h}{|\rho|}|\gm'| < 2\frac{h}{|\rho|}.
$$
Let $R_0$ be large enough and let $u(R) = 2h(R_0) - h(R)$ once $R \geq R_0$, and let $a > 1$ be such that 
$$
\lim_\infty R^{-a}\rho(R) = +\infty.$$
We deduce for $R\geq R_0$ (up to taking a larger $R_0$) 
$$
0< u'(R)< 2\frac{u(R)}{|\rho(R)|} < \frac{u}{R^a}.
$$
Applying the Gronwall Lemma provides for $R\geq R_0$
$$
u(R) \leq h(R_0)\cdot \lambda(R_0) \; \mbox{ with } \; \lambda(R_0) := \exp\left(\frac{1}{(a-1)R_0^{a-1}}\right) .
$$
We know that $\lim_\infty u =  2h(R_0)$ but we can choose $R_0$ a priori such that $\lambda(R_0) <2$, concluding that 
the function $\theta$ cannot vanish identically over $[c,c']$.

\medskip
Point (iv) is of importance for the rest of the paper so we sketch its proof as a
 variation of the proof of \cite[Theorem 3.5]{DaGr1}. Let
$$
\chi := \frac{1}{|\nbt_M|^2}\nbt_M\, .
$$
Any trajectory $\gm$ of $\chi$ is parameterized by the levels of $\bt_M$: starting at a point of $M_c$ we find
$$
\bt_M(\gm(s)) = c + s \, .
$$
Since the Malgrange condition is not affected by a change of origin of $\Rn$, 
we can assume that for every small enough positive real number $\ve$ there exists a constant
$A_\ve$ so that 

\begin{equation}\label{eq:malgrange-global}
|\bx|\cdot |\nbt_M(\bp)| \geq A_\ve \, \mbox{ for all } \, \bp = (\bx,t) \in \bt_M^{-1}[c-\ve,c+\ve] \, .
\end{equation}
For $|s|\leq \ve$ we deduce 
$$
|\gm(s)| \leq |\gm(0)| + \int_0^{|s|} \frac{1}{|\nbt_M(\gm(z))|}dz  \, .
$$
Combining this latter inequality with Gronwall Lemma provides
$$
|\gm(s)| \leq |\gm(0| \cdot \exp\left(\frac{|s|}{A_\ve}\right) \,.
$$
Since $\chi$ is $C^{1+m}$ in $M\setminus \crit(\bt_M)$, the function $\bt_M$ is $C^{1+m}$-trivial at $c$
by the flow of $\chi$ with initial conditions along $M_c$. 
\end{proof}
\begin{definition}
The set \em of generalized critical values \em is defined as 
$$
K(\bt_M) := K_0(\bt_M) \cup K_\infty(\bt_M).
$$
\end{definition}
The Malgrange condition at a regular value $c$ encodes the geometry at infinity of the pencil of nearby fibres.
Indeed we have the following
\begin{lemma}\label{lem:orthogonal-malgrange}
Let $c$ be a value taken by $\bt_M$ which is not a generalized critical value. 
Let $(\bp_k)_k$ be a sequence of points of $M$ converging in $\Mbar$ to 
$(\bv,c)$ in $\bS_\infty^{n-1}\times \{c\}\cap \Mbar$ while $\bt_M(\bp_k)$ goes to $c$.
Assume that $\nubtm(\bp_k)$ converges in $\Mbar$ to $\nu$ in $\bS^n$. 
Then $\nu$ is orthogonal to $\bv$.
\end{lemma}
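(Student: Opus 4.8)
The plan is to reduce the statement, by curve selection, to the study of a single $C^1$ definable arc on $M$ tending to $(\bv,c)$, and then to read off the orthogonality from the tangency of that arc to $M$ combined with the Malgrange condition.

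\emph{Step 1: reduction to an arc.} I would first observe that the map $\Phi\colon M\setminus\crit(\bt_M)\to\Rnbar\times\R\times\bS^n$ sending $(\bx,t)$ to its image in $\Rnbar\times\R$ together with $\nubtm(\bx,t)$ is globally definable, hence $\clos(\Phi(M\setminus\crit(\bt_M)))$ is a closed definable set. Since $c\notin K_0(\bt_M)$, for $\ve>0$ small one has $\crit(\bt_M)\cap\bt_M^{-1}([c-\ve,c+\ve])=\emptyset$, so $\nubtm$ is defined on a whole neighbourhood at infinity of the level $c$, and the hypothesis says precisely that $(\bv,c,\nu)\in\clos(\Phi(M\setminus\crit(\bt_M)))\cap(\bS_\infty^{n-1}\times\{c\}\times\bS^n)$. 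Applying the curve selection lemma at infinity to this definable set (and definable choice to lift the selected arc back to $M$), I would obtain a $C^1$ globally definable arc $\gm\colon[s_0,+\infty[\,\to M\setminus\crit(\bt_M)$, $\gm(s)=(\bx(s),t(s))$, reparametrized so that $|\bx(s)|=s$, with $\frac{\bx(s)}{s}\to\bv$, $t(s)\to c$ and $\nubtm(\gm(s))\to\nu$. All the definable functions of $s$ used below take values in compact sets, hence converge at $+\infty$; in particular set $\omega(s):=\frac{\nmbx(\gm(s))}{|\nmbx(\gm(s))|}$ and $\bw:=\lim_\infty\omega(s)\in\bS^{n-1}$.

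\emph{Step 2: asymptotics along $\gm$.} From $|\bx(s)|^2=s^2$ I get $\la\frac{\bx(s)}{s},\bx'(s)\ra=1$, while the remark of Section \ref{section:misc} applied to the arc $s\mapsto\bx(s)$ gives $\frac{\bx'(s)}{|\bx'(s)|}\to\bv$; together these force $|\bx'(s)|\to1$ and hence $\bx'(s)\to\bv$. Next, $t(s)-c$ is a bounded definable $C^1$ function of $s$, so the definable function $s\,t'(s)$ has a limit in $[-\infty,+\infty]$, and that limit must be $0$: otherwise $|t'(s)|\geq\delta/s$ eventually, with constant sign, and integrating would make $t$ unbounded. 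Thus $s\,t'(s)\to0$. Finally, $c\notin K_\infty(\bt_M)$ means the Malgrange condition (Definition \ref{def:malgrange-acv}) holds at $c$: $s\,|\nmbx(\gm(s))|\geq A_c>0$ for $s$ large, whence $\frac{1}{|\nmbx(\gm(s))|}\leq\frac{s}{A_c}$.

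\emph{Step 3: conclusion.} Since $\gm$ lies on $M$, its velocity is orthogonal to $\nu_M(\gm(s))=\nmbx(\gm(s))+\nmt(\gm(s))\dt$; dividing the resulting relation by $|\nmbx(\gm(s))|$ yields
$$
\la\omega(s),\bx'(s)\ra=-\frac{\nmt(\gm(s))\,t'(s)}{|\nmbx(\gm(s))|}\,.
$$
The left-hand side tends to $\la\bw,\bv\ra$, whereas (using $|\nmt|\le1$) the right-hand side is bounded in modulus by $\frac{s\,|t'(s)|}{A_c}\to0$; hence $\la\bw,\bv\ra=0$. On the other hand $\nubtm=-\nmt\,\omega+|\nmbx|\,\dt$ along $\gm$, so the component of $\nu=\lim_\infty\nubtm(\gm(s))$ in $\Rn\times0$ equals $-\big(\lim_\infty\nmt(\gm(s))\big)\,\bw$, a multiple of $\bw$. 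Since $\bv\in\bS_\infty^{n-1}$ has vanishing $t$-component, $\la\bv,\nu\ra$ is the scalar product of $\bv$ with that component, namely $-\big(\lim_\infty\nmt(\gm(s))\big)\la\bv,\bw\ra=0$, which is the asserted orthogonality. I expect the only genuinely delicate point to be the balance exploited in Steps 2--3 between the possible decay of $|\nmbx(\gm(s))|$ at infinity and the rate $t'(s)$ at which $t(s)$ reaches $c$: the Malgrange condition is exactly what keeps $s\,|\nmbx(\gm(s))|$ bounded below, and that is what neutralizes the factor $\frac{1}{|\nmbx(\gm(s))|}$ against $t'(s)$ once o-minimality has supplied $s\,t'(s)\to0$.
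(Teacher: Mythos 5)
Your proof is correct, and its skeleton is the same as the paper's: reduce by curve selection to a single $C^1$ globally definable arc on $M$ tending to $(\bv,c)$, exploit the tangency relation $\la \nmbx,\bx'\ra + \nmt\,\bt' = 0$, and use the Malgrange condition (available since $c\notin K_\infty(\bt_M)$) to kill the term $\nmt\,\bt'/|\nmbx|$. Where you genuinely diverge is in the bookkeeping: the paper parameterizes the arc by $|\bx(s)|=s^{-1}$ and writes Puiseux-type expansions of $\bt$, $\nmbx$, $\nmt$, $\nubtm$ with exponents in the field $\F_\cM$ of the polynomially bounded structure, concluding by comparing exponents ($e\le 1$ from Malgrange against $s^{a-1+f}\sim s^{e'-2}$, forcing $\la\nu^\bx,\bv\ra=0$); you avoid expansions altogether, using only soft o-minimal facts --- boundedness plus definability gives existence of limits, the monotonicity theorem gives $s\,t'(s)\to 0$ for a bounded definable $C^1$ function, and the Section \ref{section:misc} remark gives $\bx'(s)\to\bv$ --- and then pass to the limit directly in the divided tangency relation. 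The trade-off: your argument is more elementary and does not use polynomial boundedness at all (it works over any o-minimal structure), while the paper's expansion technique, tailored to the polynomially bounded setting assumed throughout, is the one reused in the later quantitative statements (e.g. Proposition \ref{prop:spherical-exponent}), so the two treatments are complementary rather than one subsuming the other. The small points you should make sure are stated (and you essentially do) are that $\nubtm$ is defined near infinity on the band $\bt_M^{-1}([c-\ve,c+\ve])$ because $c\notin K_0(\bt_M)$, and that the arc produced by curve selection realizes the \emph{given} limits $(\bv,c,\nu)$, which is exactly why you lift through the graph of $\nubtm$ before taking closures.
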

\begin{proof}
These limits can be achieved along a $C^1$ globally definable path $]0,1[ \mapsto M, \; s\mapsto \bp(s) = (\bx(s),t(s))$ as $s$
goes to $0$ with $\lim_0 \bp = (\bv,c) \in M^\infty$. We choose the parameterization of $\bp$ so that $|\bx(s)| = s^{-1}$.
Let $\bt(s) := \bt_M(\bp(s))$ and so on. Let us write
\begin{eqnarray*}
\bt (s) & = & c + A s^a + o(s^a) \\
\bx (s) & = & s^{-1}\bv + o(s^{-1}) \\ 
\nubtm (s)& = & \nu + s^d \nu_1 + o(s^d) \\
\nmbx(s) & = & s^e \nu^\bx + o(s^e) \\
\nmt (s) & = & s^f\nu^t + o(s^f)
\end{eqnarray*}
where $a,d,e,f \in (\F_\cM)_{\geq 0}\cup\{+\infty\}$ with $a,d$ positive exponents, $\min(e,f) =0$,
and $A\in \R$, 
$\nu_1\in\Rn\times\R$, $\nu^\bx \in \R^n \times 0$, $\nu^t \in \R$
are non-zero vectors whence the corresponding exponent is not $\infty$ and 
$$
\nu = -\nu^t\frac{\nu^\bx}{|\nu^\bx|} + |\nu^\bx|\dt.
$$

We deduce that there exists a continuous definable function $\vp : (\R_{\geq 0},0) \mapsto \R$ with $\vp(0) = Aa$, such that 

\smallskip
\begin{center}
$\bt'(s) = s^{a-1}\vp(s)\;\;$ and $\;\;\bx' (s)  =  -s^{-2}\bv + o(s^{-2})$.
\end{center}
Using the Malgrange condition provides
$$
1 \geq |\nmbx(s)| = |\nbt_M (s)| \geq A_c \,s 
$$
for some positive constant $A_c$. Thus $e\leq 1$. Since 
$$
0 = \la\nmbx + \nmt \dt,\bx' + \bt'\dt\ra = \la \nmbx, \bx' \ra + \nmt \bt'\;\;
$$
we deduce that
$$
\nmt \, \bt' = - \la \nmbx, \bx' \ra  = s^{e-2}[-\la \nu^\bx , \bv \ra  + o(1)].
$$
From this last equation we deduce that there exists an exponent $e'\geq e$ such that
$$
s^{a-1+f} \sim | \nmt \, \bt' | = | \la \nmbx, \bx' \ra | \sim s^{e'-2},
$$
so that $\la \nu^\bx , \bv \ra = 0.$
\end{proof}

\medskip
To conclude this section we  introduce a final regularity condition.  
\begin{definition}\label{def:spher-infty}
Let $c$ be a regular value of $\bt_M$ taken by $\bt_M$.

The function $\bt_M$ is \em horizontally spherical at $c$ at infinity \em
if for any sequence $(\bp_k)_k$ of $M$ converging to $(\bu,c) \in M^\infty$, then
\begin{equation}\label{eq:def-HorSph}
\left\langle \lim_\infty \frac{\nu_M^\bx}{|\nu_M^\bx|}\, , \, \bu \right\rangle = 0,
\end{equation}
where $\lim_\infty \frac{\nu_M^\bx}{|\nu_M^\bx|}$ means the closed set of all the possible accumulation values, as $k$ 
goes to infinity, of the unitary vector field of $\frac{\nu_M^\bx}{|\nu_M^\bx|}$ along the sequence $(\bp_k)_k$.
\end{definition}
Note that the following holds true:
\begin{lemma}
The condition of Equation \ref{eq:def-HorSph} is equivalent to 
$$
\langle \lim_\infty \nu_{\bt_M} , \bu\rangle = 0
$$
along any sequence $(\bp_k)_k$ of $M$ converging to a point 
$(\bu,c)$ in $M^\infty$.
\end{lemma}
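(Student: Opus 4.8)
The plan is to prove the equivalence by relating the two limit vectors $\lim_\infty \nu_M^\bx/|\nu_M^\bx|$ and $\lim_\infty \nu_{\bt_M}$ directly through the algebra of the gradient. Recall from Section \ref{section:regularity} that along any sequence $(\bp_k)_k$ in $M$ we have $\nu_M = \nu_M^\bx + \nu_M^t \dt$ with $|\nu_M^\bx|^2 + (\nu_M^t)^2 = 1$, and that the relation
\[
\nabla \bt_M = -\nu_M^t \nu_M^\bx + |\nu_M^\bx|^2 \dt\, , \qquad |\nabla \bt_M| = |\nu_M^\bx|
\]
holds, so that wherever $\bt_M$ is regular,
\[
\nu_{\bt_M} = \frac{\nabla \bt_M}{|\nabla \bt_M|} = -\nu_M^t \frac{\nu_M^\bx}{|\nu_M^\bx|} + |\nu_M^\bx|\, \dt\, .
\]
Since the $t$-component of a horizontal vector $\bu = (\bu,0) \in M^\infty \subset \bS_\infty^{n-1}\times\R$ is zero, taking scalar product with $\bu$ kills the $\dt$-term and yields $\langle \nu_{\bt_M}, \bu\rangle = -\nu_M^t\, \langle \nu_M^\bx/|\nu_M^\bx|, \bu\rangle$ along the sequence.

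The remaining point is to control the scalar factor $\nu_M^t$ in the limit. First I would pass, as in the proof of Lemma \ref{lem:orthogonal-malgrange}, to a $C^1$ globally definable arc $s\mapsto \bp(s)$ in $M$ with $\bp(s)\to(\bu,c)$, so that all the relevant quantities admit asymptotic expansions $s^e\nu^\bx$, $s^f\nu^t$ with $\min(e,f)=0$, and the accumulation values of $\nu_M^\bx/|\nu_M^\bx|$ and of $\nu_{\bt_M}$ exist. If $f>0$ then $\nu_M^t \to 0$, forcing $|\nu_M^\bx|\to 1$, hence $\lim \nu_{\bt_M} = \lim \nu_M^\bx/|\nu_M^\bx|$ and the two conditions are literally the same statement. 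If $f=0$ then $\nu_M^t$ tends to a nonzero limit, so the relation $\langle \nu_{\bt_M},\bu\rangle = -\nu_M^t\langle \nu_M^\bx/|\nu_M^\bx|,\bu\rangle$ shows that one scalar product vanishes in the limit if and only if the other does. Covering all accumulation values by running over all such arcs gives the equivalence of the two conditions along every sequence converging to $(\bu,c)$.

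The main subtlety — and the reason the curve selection is needed rather than a naive pointwise argument — is that $\nu_M^\bx/|\nu_M^\bx|$ need not converge along an arbitrary sequence, and $|\nu_M^\bx|$ could a priori oscillate or degenerate; the o-minimality (definability of $M$ and of the unitary fields, together with polynomial boundedness) is exactly what guarantees the asymptotic expansions with well-defined exponents and leading coefficients, so that the dichotomy on $f$ is exhaustive and the factor $\nu_M^t$ has a genuine limit along the arc. I expect the write-up to be short: set up the arc, record the two expansions, split on whether $f=0$, and in each case read off the equivalence from the displayed identity for $\nu_{\bt_M}$ in terms of $\nu_M^\bx/|\nu_M^\bx|$.
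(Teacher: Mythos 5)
Your reduction to a definable arc and the identity $\nubtm=-\nmt\,\frac{\nmbx}{|\nmbx|}+|\nmbx|\,\dt$ (so that $\la\nubtm,\bu\ra=-\nmt\,\la\frac{\nmbx}{|\nmbx|},\bu\ra$ for horizontal $\bu$) are fine, and they do give the implication ``Equation (\ref{eq:def-HorSph}) $\Rightarrow$ the $\nubtm$-condition'' (since $|\nmt|\le 1$) as well as the case $f=0$ of the converse. The paper itself states the lemma without proof, so the only question is whether your argument closes; it does not in the case $f>0$. There you assert that $\nmt\to 0$ forces $\lim\nubtm=\lim\frac{\nmbx}{|\nmbx|}$, ``so the two conditions are literally the same''. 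That is false: since $|\nubtm|=1$ and its horizontal part has norm $|\nmt|$, when $\nmt\to 0$ one has $\nubtm\to\dt$ (you have confused $\nubtm$ with $\nu_M$, which is the field that tends to $\lim\frac{\nmbx}{|\nmbx|}$ in this case). Consequently the hypothesis $\la\lim\nubtm,\bu\ra=0$ is vacuously true along such an arc and gives no control whatsoever on $\la\frac{\nmbx}{|\nmbx|},\bu\ra$; as written, the implication ``$\nubtm$-condition $\Rightarrow$ Equation (\ref{eq:def-HorSph})'' is unproved precisely in this case.

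The gap is fixable, but by a different mechanism: when $\nmt\to 0$, Equation (\ref{eq:def-HorSph}) holds automatically. Indeed, along a $C^1$ globally definable arc $\bp(s)=(\bx(s),t(s))$ with $|\bx(s)|\to\infty$ and $t(s)\to c$, the unit tangent $\frac{\bp'}{|\bp'|}$ tends to $(\pm\bu,0)$ (the fact recalled in Section \ref{section:misc}, as used in the proof of Lemma \ref{lem:transverse-horizontal-sphere}), and tangency $\la\nu_M,\bp'\ra=0$ then gives, in the limit, $\la\lim\nmbx,\bu\ra=0$; since $|\nmbx|\to 1$ when $\nmt\to 0$, this is exactly $\la\lim\frac{\nmbx}{|\nmbx|},\bu\ra=0$. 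Inserting this argument in place of your $f>0$ step, and keeping your curve-selection reduction and the identity-based argument when $\nmt$ has a nonzero limit, yields a complete proof of the equivalence.
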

Indeed, similarly to what has been done for globally definable functions, we have the following:
%
\begin{proposition}\label{prop:spherical-exponent}
Let $c$ be a regular value taken by $\bt_M$. 
The function $\bt_M$ is horizontally spherical at $c$ at infinity if and only if there 
exists an exponent $e_c$ in $\F_\cM \cap ]-\infty,1[$ 
and a positive constant $E_c$, such 
that there exist positive real numbers $\ve$ and $R$ such that 
\begin{equation}\label{eq:bochnak-Lojasiewicz-exponent}
(\bx,t) \in \bt_M^{-1}([c-\ve,c+\ve])\setminus \bB_R \Longrightarrow 
|\bx|\cdot|\nbt_M| \geq  E_c |\bt_M(\bx,t) - c|^{e_c}.
\end{equation}
\end{proposition}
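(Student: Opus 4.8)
The plan is to prove the two implications separately, in both directions reducing—via the Curve Selection Lemma and the arc lemma at infinity recalled in Section \ref{section:misc}—to the asymptotic study of a globally definable $C^1$ arc $s\mapsto\bp(s)=(\bx(s),t(s))$ of $M$ with $s\to 0^+$, $|\bx(s)|=s^{-1}$ and $\bp(s)\to(\bu,c)\in M^\infty$, exactly as in the proof of Lemma \ref{lem:orthogonal-malgrange}. Along such an arc one has $\bx(s)=s^{-1}\bu+o(s^{-1})$, $\bx'(s)=-s^{-2}\bu+o(s^{-2})$, the tangency relation $\langle\nu_M^\bx,\bx'\rangle+\nu_M^t\,t'=0$, the identities $|\nbt_M|=|\nu_M^\bx|$ and $|\nu_M^\bx|^2+|\nu_M^t|^2=1$, convergence $\nu_M^\bx/|\nu_M^\bx|(\bp(s))\to\onu^\bx\in\bS^{n-1}$ and $|\nu_M^\bx(s)|\to L\in[0,1]$ by o-minimality, and $t(s)-c$ either $\equiv 0$ or $\sim As^{a}$ with $A\neq 0$ and $a\in(\F_\cM)_{>0}$. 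Since $K_0(\bt_M)$ is finite, fix $\ve_0>0$ so that $\bt_M^{-1}([c-\ve_0,c+\ve_0])$ contains no critical point of $\bt_M$.

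Assume first the inequality \eqref{eq:bochnak-Lojasiewicz-exponent}. Let $(\bp_k)_k$ in $M$ converge to $(\bu,c)\in M^\infty$; after passing to a subsequence $\bp_k\notin\crit(\bt_M)$ (as $\bt_M(\bp_k)\to c\notin K_0(\bt_M)$) and $\nu_M^\bx/|\nu_M^\bx|(\bp_k)$ converges to a unit vector $\onu^\bx$. By the Curve Selection Lemma there is an arc $\bp(s)$ as above with $\bp(s)\to(\bu,c)$ and $\nu_M^\bx/|\nu_M^\bx|(\bp(s))\to\onu^\bx$. If $t\equiv c$, the tangency relation reads $\langle\nu_M^\bx/|\nu_M^\bx|(\bp(s)),\bx'(s)\rangle=0$, whence $\langle\onu^\bx,\bu\rangle=0$ on letting $s\to 0$. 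If $t\not\equiv c$ and $\langle\onu^\bx,\bu\rangle=\delta\neq 0$, then $\langle\nu_M^\bx,\bx'\rangle=-s^{-2}|\nu_M^\bx|(\delta+o(1))$, so the tangency relation and $|\nu_M^t|\leq1$ give $s^{-2}|\nu_M^\bx(s)|\,|\delta+o(1)|\leq|t'(s)|\sim|Aa|\,s^{a-1}$, i.e. $|\nu_M^\bx(s)|\lesssim s^{a+1}$; on the other hand \eqref{eq:bochnak-Lojasiewicz-exponent} gives $s^{-1}|\nu_M^\bx(s)|=|\bx||\nbt_M|\geq E_c|t(s)-c|^{e_c}\sim E_c|A|^{e_c}s^{ae_c}$, i.e. $|\nu_M^\bx(s)|\gtrsim s^{ae_c+1}$. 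Since $e_c<1$ forces $ae_c+1<a+1$, letting $s\to0$ is absurd; hence $\langle\onu^\bx,\bu\rangle=0$ always, so $\bt_M$ is horizontally spherical at $c$.

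Conversely, assume $\bt_M$ is horizontally spherical at $c$; we may assume $\Mbar\cap(\bS_\infty^{n-1}\times\{c\})\neq\emptyset$, the contrary case being trivial. Fix $R_0$ large, set $\Omega:=\{(\bx,t)\in M:\ |\bx|>R_0,\ |t-c|\leq\ve_0\}$, and consider the continuous globally definable function $g:=|\bx|\cdot|\nbt_M|=|\bx|\cdot|\nu_M^\bx|$, strictly positive on $\Omega$, together with the globally definable function $\Theta(\delta):=\inf\{g(\bx,t):\ (\bx,t)\in\Omega,\ |t-c|=\delta\}$, finite on $(0,\ve_0]$ after shrinking $\ve_0$ (using that $c$ is a regular value of $\bt_M$). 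If $\Theta(\delta_0)=0$, a minimising sequence cannot stay bounded—its limit would be a point of $M_{c\pm\delta_0}$ where $\nbt_M$ vanishes, impossible for $\delta_0$ small—so it tends to infinity, forcing $c+\delta_0$ or $c-\delta_0$ into $K_\infty(\bt_M)$; as $K_\infty(\bt_M)$ is finite, $\Theta$ is finite and positive on some $(0,\ve_1]$. By polynomial boundedness, $\Theta(\delta)\sim C\delta^{e}$ as $\delta\to0$ with $C>0$ and $e\in\F_\cM$. If $e\leq0$, then $\Theta$ is bounded below near $0$ by a positive constant; extending this bound to $M_c$ via continuity of $g$ and non-constancy of $\bt_M$ near $M_c\cap\Omega$, \eqref{eq:bochnak-Lojasiewicz-exponent} holds with $e_c=0$. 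It remains to prove $e<1$.

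Suppose $e\geq1$; then $\Theta(\delta)\to0$. By definable choice, pick a globally definable arc $\delta\mapsto(\bx(\delta),t(\delta))\in\Omega$ with $|t(\delta)-c|=\delta$ and $g(\bx(\delta),t(\delta))\leq2\Theta(\delta)$; as before $|\bx(\delta)|\to+\infty$. Reparametrising by $s$ with $|\bx(s)|=s^{-1}$ gives $t(s)-c\sim c_1 s^{b}$ with $c_1\neq0$ and $b\in(\F_\cM)_{>0}$, and $\bp(s)\to(\bu,c)\in M^\infty$, $\nu_M^\bx/|\nu_M^\bx|(\bp(s))\to\onu^\bx$, with $\langle\onu^\bx,\bu\rangle=0$ by horizontal sphericalness. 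Plugging $\langle\onu^\bx,\bu\rangle=0$ into the tangency relation gives $\langle\nu_M^\bx,\bx'\rangle=o(s^{-2}|\nu_M^\bx|)$, hence $|\nu_M^t|\,s^{b+1}=o(|\nu_M^\bx|)$; splitting according to whether $L=0$ or $L>0$ and using $|\nu_M^\bx|^2+|\nu_M^t|^2=1$, one gets $g(\bp(s))\,s^{-b}=s^{-b-1}|\nu_M^\bx(s)|\to+\infty$. Yet $g(\bp(s))\leq2\Theta(\delta(s))\sim2C|c_1|^{e}s^{be}$, so $g(\bp(s))\,s^{-b}\lesssim s^{b(e-1)}$, which is bounded if $e=1$ and tends to $0$ if $e>1$—a contradiction. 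Therefore $e<1$, and \eqref{eq:bochnak-Lojasiewicz-exponent} holds with $e_c=e$, $E_c=C/2$, $\ve=\ve_1$ and $R=R_0$. I expect the last paragraph to carry the main difficulty: producing the almost-minimising definable arc, controlling its asymptotic data, and, above all, extracting the \emph{strict} bound $e<1$—rather than merely $e\leq1$—from horizontal sphericalness through the tangency relation.
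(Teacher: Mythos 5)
Your proof is correct, and while your ``inequality $\Rightarrow$ horizontally spherical'' direction is essentially the paper's (asymptotic expansions along a definable arc with $|\bx(s)|=s^{-1}$, the tangency relation $\la \nu_M^\bx,\bx'\ra+\nu_M^t\,t'=0$, and a comparison of exponents forced by \eqref{eq:bochnak-Lojasiewicz-exponent} with $e_c<1$), your proof of the converse takes a genuinely different route. The paper extends the definable functions $\psi_0=|\bt_M-c|/(|\bx|\cdot|\nbt_M|)$ and $\rho_0=|\bx|^{-1}$ continuously by $0$ to the part at infinity of the strip, applies a \L ojasiewicz inequality $\psi_0\le B\rho_0^b$ there, and combines it with the radial minimum function $\mu_0(R)=\min\{|\bx|\cdot|\nbt_M| : |\bx|=R\}\sim A_0R^{-a_0}$ to produce an explicit exponent $e_c=a_0/(a_0+b)\in(0,1)$; since this exponent is automatically positive, the paper never has to worry about the level $t=c$ itself. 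You instead slice by the value of $|t-c|$, introduce $\Theta(\delta)=\inf\{|\bx|\cdot|\nbt_M| : |t-c|=\delta,\ |\bx|>R_0\}$, read off its exponent $e$ from polynomial boundedness, and exclude $e\ge 1$ by a contradiction along an almost-minimizing definable arc, using sphericalness through the same tangency relation as in the other direction (the case $L=0$ versus $L>0$ split is handled correctly, and when $L>0$ sphericalness is not even needed). Your route is more self-contained -- it uses only definable choice, curve selection and Miller's exponent dichotomy, and it pleasantly mirrors the converse argument -- at the price of a few extra bookkeeping cases that the paper's explicit $e_c>0$ avoids: finiteness/nonemptiness of the slices defining $\Theta$ near $\delta=0$ (if the slices are eventually empty the statement is vacuous outside a large ball), the case $e\le 0$ where you must take $e_c=0$ and extend the lower bound to points with $t=c$ by continuity and the submersivity of $\bt_M$ near $M_c$, and the justification that the fiberwise positivity of $\Theta$ only fails at finitely many $\delta$ because $K_0(\bt_M)\cup K_\infty(\bt_M)$ is finite. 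All of these you either address or can fill in with the standard o-minimal facts the paper itself invokes, so I see no genuine gap.
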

\begin{proof}
In this globally definable and polynomially bounded 
context, we can show (as in \cite{DaGr2}) that a Bochnak-\L ojasiewicz inequality type at the value $c$ not in $K_0(\bt_M)$
at infinity holds: there exists a positive constant $L_c$ such that there exist positive real numbers $\ve$ and $R$ such that 
\begin{equation}\label{eq:bochnak-Lojasiewicz}
(\bx,t) \in \bt_M^{-1}([c-\ve,c+\ve])\setminus \bB_R \Longrightarrow |\bx|\cdot|\nbt_M| \geq L_c|\bt_M(\bx,t) - c|.
\end{equation}

\medskip\noindent
1) Assume $\bt_M$ is horizontally spherical at $c$ at infinity. 

\smallskip\noindent
Let $\bp: ]0,1[\mapsto M$ be any continuous globally definable path such that it goes to $(\bu,c)$
as $s$ goes to $0$. Writing
$\bp = (\bx,\bt)$ and parameterizing as $|\bx(s)| = s^{-1}$, we have 
$$
\bp(s) = (s^{-1} \bu + o(s^{-1}), c + As^a + o(s^a))
$$
for $A\neq 0$ and $a\in (\F_\cM)_{>0} \cup \{+\infty\}$. The numbers $A$ and $a$ depend on the choice of the
path $s\mapsto \bp(s)$. We obtain that along $\bp$ there exists $a' \leq a$ such that
$$
|\bx|\cdot|\nbt_M | \sim s^{a'}.
$$
Note that 
$$
a' < a \; \Longleftrightarrow \; 
\lim_0 \left\langle \nu_{\bt_M}, \frac{\bp}{|\bp|} \right\rangle = 0.
$$
In particular the latter equivalence 
shows that
$$
\bt_M (\bx,t) \to c \mbox{ \rm as } \bx \to +\infty \; \Longrightarrow \; \frac{|\bt_M(\bx,t)) - c|}{|\bx|\cdot |\nbt_M(\bx,t)|} \to 0
\mbox{ \rm as } \bx \to +\infty \,.
$$

\bigskip
Let $\ve_0$ be a small enough positive number such that $[c-\ve_0,c+\ve_0]$ contains
only a single asymptotic critical value: $c$. Let $R_0$ be a positive large enough number.
Let $V_{\ve_0,R_0}$ be the globally definable subset defined as
$$
V_{\ve_0,R_0} := \{(\bx,t)  \in M \, : \, |\bt - c| \leq \ve_0\, , |\bx|\geq R_0\}.
$$
For $R\geq R_0$ let $\mu_0: [R_0,+\infty[ \to \R$ be defined as
$$
\mu_0 (R) := \min \{|\bx|\cdot |\nbt_M(\bx,t)| \; \mbox{ \rm for } \; (\bx,t) \in V_{\ve_0,R_0} \hbox{ and }  |\bx| = R\} .
$$
The function $\mu_0$ is globally definable and tends to $0$ as $R$ goes to infinity
since $c$ is an ACV. If $R_0$ is large enough, we can write
$$
\mu_0 (R) = A_0 R^{-a_0}(1+o(1)) \; \mbox{\rm with } \; A_0> 0, \, a_0\in (\F_{\cM})_{>0} .
$$

Let $V_0$ be the closure of $V_{\ve_0,R_0}$ in $\overline{M}$, thus $V_0$ is compact in $\overline{\R^n}\times\R$.
Let $W_0$ be the part at infinity of $V_0$, that is
$$
W_0 : = V_0 \cap (\bS_\infty^{n-1} \times \R).
$$
The function 
$$
\psi_0 : V_0\setminus W_0 \ni (\bx,t) \to \frac{|\bt_M(\bx,t)) - c|}{|\bx|\cdot |\nbt_M(\bx,t)|}
$$
extends continuously and definably over $V_0$ taking the value $0$ along $W_0$, by hypothesis
of horizontal sphericalness. 
In the same way, the function
$$
\rho_0 : V_0 \setminus W_0 \ni (\bx,t) \to |\bx|^{-1}
$$
also extends continuously and definably over $V_0$ taking the value $0$ along $W_0$.
Furthermore we see that 
$$
\rho_0 = 0 \Longrightarrow \psi_0 = 0 \, .
$$
Thus by a \L ojasiewicz argument, there exist a positive exponent $b$ and a positive constant $B$ such that 
in $V_0$ the following inequality holds true:
$$
\psi_0 \leq  B \rho_0^b \Longleftrightarrow \psi_0 \leq B |\bx|^{-b}.
$$
Let $\mu_1$ be the function defined as follows:
$$
\mu_1 : V_0 \setminus W_0 \ni (\bx,t) \to \mu_0 (|\bx|).
$$
The function $\mu_1$ is globally definable, continuous and extends continuously to $V_0$ taking the value $0$ along $W_0$.
Therefore we deduce that in $V_0\setminus W_0$ we have
$$
|\bt_M(\bx,t) - c| \leq C_0 \cdot \mu_1^\frac{b}{a_0} \cdot|\bx| \cdot |\nbt_M(\bx,t)| \leq C_0\cdot(|\bx|\cdot |\nbt_M(\bx,t)|)^\frac{b+a_0}{a_0} 
$$
where $C_0$ is a positive constant.
This latter inequality provides the announced result.

\medskip\noindent
2) Assume the inequality holds.

\smallskip\noindent
Let $\bp :]0,1[\mapsto M$ be a globally definable continuous path such that $\lim_0 \bp = (\bu,c)$.
Writing $\bp = (\bx,\bt)$ and parameterizing as $|\bx(s)| = s^{-1}$, we have that
\begin{eqnarray*}
\bt(s) & = & c + As^a + o(s^a) \\
\bp(s) & = &(s^{-1}\bu + o(s^{-1}), \bt(s)) \in \Rn\times\R \\
\nu_M^\bx (s) & = & s^b \nu + o(s^b) \in \Rn\times 0 \\
\nu_M^t (s) & = &  s^d (\lambda v) + o(s^d) \in \Rn\times 0 \\
\end{eqnarray*}
with $A\neq 0$ and $a\in (\F_\cM)_{>  0} \cup \{\infty\}$ while $b,d\in (\F_\cM)_{\geq 0}$, $\min(b,d) = 0$, with 
$\lambda \neq 0$ and $\nu \in \Rn\setminus 0$.
\\
Since the path $\bp$ lies on $M$, we know that 
$$
\la \nu_M,\bp'\ra = \la \nu_M^\bx,\bx'\ra + \nu_M^t \bt' = 0
$$
from which we deduce
\begin{equation}\label{eq:bda}
b - 2 \leq d + a - 1 \,.
\end{equation}
We want to show that $\nu$ is orthogonal to $\bu$, in other words $b < d+a+1$. 

\smallskip\noindent
We have the following estimates
\begin{eqnarray*}
|\nbt_M|(s) = |\nu_M^\bx|(s) & \sim & s^b \\
|\bx|\cdot |\nbt_M|(s) & \sim & s^{b-1}.
\end{eqnarray*}
Using Inequality (\ref{eq:bochnak-Lojasiewicz-exponent}), we get 
\begin{equation}\label{eq:b<a+1}
b-1 \leq e_c \cdot a < a \hbox{ and } b< a+1.
\end{equation} 
Since $d$ is non negative, this  yields the orthogonality of $\bu$ and $\nu$.
\end{proof}
%
%
%
%
%
%
%
%
%
%
%
%
%
%
%
%
%
%
%
%
%
%
%
%
%
%
%
%
%
%
%
%
%
\section{Comparing regularity conditions and triviality}\label{section:comparing}
$ $ 

We are working within the context of Section \ref{section:regularity}.

We have introduced previously three regularity conditions at infinity for the 
function $\bt_M$. We are going to compare them here.

The hypersurface $M \subset \Rn \times \R$ is the definable family of the hypersurfaces $\{T_t\}_{t \in \R}$ 
of $\Rn$ and $\overline{M}$ is its closure in $\overline{\Rn} \times \R$. Let $M^\infty$ be the intersection of 
$\overline{M}$ with the boundary at infinity $\bS_\infty^{n-1} \times \R$. By Lemma \ref{lem:transverse-horizontal-sphere}, 
the globally definable function $\br_M : \Rn\times\R \mapsto \R$,  defined as $(\bx,t) \mapsto \vert \bx \vert $, is
transverse to $M\cap \Rn\times\R\times]-A,A[$ for some positive given
$A$ whenever $\bx$ is large enough. 

In Section \ref{section:t-reg} was defined
$$
\scr{C}(M):= \scrX_{\br_M}^\infty 
$$ 
the \em space of characteristic covectors of $M$ at infinity, \em 
which is a closed definable subset of $\bS_\infty^{n-1} \times \R \times \bG^\vee(1,n+1)$.

\medskip 
From Definition \ref{def:t-reg}, we also know that: (i) the family $\{T_t\}_{t \in \R}$ is $t$-equisingular at $\bp \in M^\infty$ if 
\begin{equation}\label{eq:t-reg-tm}
\scr{C}(M)_{\bp} \cap (\scrX_{\tau}^\infty)_{\bp} = \emptyset,
\end{equation}
where $\tau :\overline{\Rn} \times\R \mapsto \R$ is the projection on the last factor
and, (ii) the family $\{T_t\}_{t \in \R}$ is $t$-equisingular  at infinity at $c$ if it is
$t$-equisingular at $\bp \in M^\infty$ for all $\bp \in M^\infty \cap \tau^{-1}(c)$.

Let $\bp = (\bu,c) \in M^\infty$. The family $\{T_t\}_{t \in \R}$ is $t$-equisingular at $\bp$ 
if for any sequence $\bp_k = (\bu_k,t_k)$ converging to $\bp$ such that the
sequence of $T_k'$, the tangent space to the level of $\br_M$ through $\bp_k$, converges to $T'$, 
then the latter is not contained in $\Rn\times  0$. 
This definition is more geometric than the Malgrange condition, which is of interest since
we have the following:
\begin{proposition}[see \cite{DRT} for functions]\label{prop:t-reg-malg}
If the family $\{T_t\}_{t \in \R}$ is $t$-equisingular  at infinity at $c$ then
the function $\bt_M$ satisfies the Malgrange condition at $c$.
\end{proposition}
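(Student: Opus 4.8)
The plan is to argue by contraposition: assume $\bt_M$ fails the Malgrange condition at $c$, so $c\in K_\infty(\bt_M)$, and produce a point $\bp\in M^\infty\cap\tau^{-1}(c)$ at which $t$-equisingularity fails, i.e. $\scr{C}(M)_\bp\cap(\scrX_\tau^\infty)_\bp\neq\emptyset$. The starting point is the curve selection available in this polynomially bounded o-minimal setting. Since $c$ is an asymptotic critical value, as in the proof of finiteness of $K_\infty(\bt_M)$ there is a $C^1$ globally definable arc $\gm:]0,1[\,\mapsto M$, which I parameterize so that $\gm(s)=(\bx(s),t(s))$ with $|\bx(s)|=s^{-1}$, along which $t(s)\to c$, $\gm(s)/|\gm(s)|\to(\bu,0)\in M^\infty$, and $|\bx(s)|\cdot|\nbt_M(\gm(s))|=|\bx(s)|\cdot|\nmbx(\gm(s))|\to 0$. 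Writing expansions $t(s)=c+As^a+o(s^a)$, $\nmbx(s)=s^e\nu^\bx+o(s^e)$, $\nmt(s)=s^f\nu^t+o(s^f)$ with $\min(e,f)=0$, the Malgrange failure forces $e>1$ (since $|\bx|\cdot|\nmbx|\sim s^{e-1}\to 0$), hence $f=0$ and $\nu^t\neq 0$: the limiting unit normal $\nu$ to $M$ at $\bp=(\bu,c)$ is vertical, $\nu=\nu^t\dt$, equivalently $\nu^\bx$ is negligible compared to $\nu^t$.

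Next I would translate this into conormal language. Along $\gm$, let $T'_s\subset T_{\gm(s)}M$ be the tangent space to the level of $\br_M$ through $\gm(s)$; passing to a subsequence (using definable choice) I may assume $T'_s$ converges to some $T'$, and the associated conormal covector $\xi_s$ (the line of linear forms vanishing on $T'_s$ inside $T_{\gm(s)}M$) converges to a covector $\xi\in\scr{C}(M)_\bp$. The key computation is to identify the kernel of $\xi$. The level of $\br_M$ on $M$ through $\gm(s)$ has tangent space $\{\bw\in T_{\gm(s)}M:\la\bx(s),\bw_\bx\ra=0\}$ where $\bw=(\bw_\bx,w_t)$; combined with $\la\nu_M(s),\bw\ra=0$, i.e. $\la\nmbx(s),\bw_\bx\ra+\nmt(s)w_t=0$. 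I want to show $T'$ is contained in $\Rn\times 0$, i.e. the limiting covector has kernel $\Rn\times 0$ and hence lies in $(\scrX_\tau^\infty)_\bp$, since covectors in $(\scrX_\tau^\infty)_\bp$ are exactly those with kernel the horizontal hyperplane (as recalled just after Definition \ref{def:t-reg}). A vector $\bw=(\bw_\bx,w_t)$ with $w_t\neq 0$ survives into $T'$ only if it can be realized as a limit of vectors in $T'_s$; but on $T'_s$ the relation $\la\nmbx(s),\bw_\bx\ra=-\nmt(s)w_t$ together with $\la\bx(s),\bw_\bx\ra=0$ and $|\nmt(s)|\sim s^0$ while $|\nmbx(s)|\sim s^e$ with $e>1$ forces $\bw_\bx$ to blow up (of order $s^{-e+\text{something}}$) unless $w_t\to 0$; after renormalizing the tangent vectors to unit length, the $t$-component must vanish in the limit, so $T'\subset\Rn\times 0$. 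Equivalently the limiting conormal covector $\xi$ annihilates $\Rn\times 0$, placing it in both $\scr{C}(M)_\bp$ and $(\scrX_\tau^\infty)_\bp$, contradicting $t$-equisingularity at $\bp$.

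The main obstacle I anticipate is the last renormalization step: one must be careful that the tangent spaces $T'_s$ are honest limits in the Grassmann bundle and that taking unit-length representatives and passing to the limit genuinely kills the vertical component — this requires tracking the precise exponents of $\nmbx$, $\nmt$ and $\bx$ along $\gm$ and checking that no cancellation in $\la\nmbx(s),\bw_\bx\ra$ can rescue a vertical direction. A cleaner route, which I would pursue in parallel, is to phrase everything at the level of covectors rather than tangent vectors: the conormal of $\br_M$ at $\gm(s)$ is spanned inside $L(T_{\gm(s)}M,\R)$ by the restriction of the form $\bx\mapsto\la\bx(s),\cdot\ra$, and I would compute its limit after writing $T_{\gm(s)}M$ in coordinates adapted to the splitting $\nu_M(s)=\nmbx(s)+\nmt(s)\dt$; the hypothesis $e>1$, $f=0$ then directly yields that the limiting form is proportional to $dt$ restricted to the limiting tangent hyperplane $T_\bp M^{\lim}=(\Rn\times 0)\oplus(\text{vertical complement})$, i.e. it lies in $(\scrX_\tau^\infty)_\bp$, which is exactly the desired contradiction. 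I would also double-check the boundary case where $\gm$ can be chosen inside $\{\dt F=0\}$, which only makes the vertical-normal conclusion more immediate.
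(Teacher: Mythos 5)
Your proposal is correct and follows essentially the paper's own proof: both argue by contraposition, take a globally definable path to infinity with $\bt_M\to c$ witnessing the failure of Malgrange, show that the limiting normal direction to the levels of $\br_M$ becomes vertical, and conclude that a limit of their tangent spaces lies in $\Rn\times 0$, i.e.\ $\scr{C}(M)_\bp\cap(\scrX_\tau^\infty)_\bp\neq\emptyset$, contradicting $t$-equisingularity at $\bp$. The renormalization step you flag as the main obstacle is in fact immediate -- for unit vectors $\bw_s=(\bw_{\bx,s},w_{t,s})\in T'_s$, Cauchy--Schwarz gives $|w_{t,s}|\le |\nmbx(\gm(s))|/|\nmt(\gm(s))|\to 0$, so no cancellation can rescue a vertical direction -- while the paper shortcuts the exponent bookkeeping by exhibiting one explicit normal vector $V$ to the $\br_M$-levels and deducing $V\to \pm\,\dt$ directly from $|\bx|\cdot|\nbt_M|\to 0$ along the path.
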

\begin{proof}
Suppose that the Malgrange condition is not satisfied at $c$. There is a globally definable path 
$\gm : [1,+\infty[ \mapsto M$, $\gm=(\gm_\bx,\gm_t)$, with $\lim_\infty \gm = \bp = (\bu,c)\in M^\infty$ and such
that 
$$
(|\bx|\cdot|\nbt_M|) \circ\gm \to 0 \; \mbox{ as } \, r \to +\infty \, .
$$
Equivalently 
$$
\left(|\bx|\cdot \left|\frac{\dd_\bx F}{\nabla F}\right| \right) \circ \gm \to 0 \,.
$$

The following vector $V$ in $\R^n \times \R $:
$$
V = \frac{\dd_\bx F}{\vert \nabla F \vert} \left( \begin{array}{c}
\bx \\  0 \end{array} \right) + \frac{1}{\vert \nabla F \vert} \left( \begin{array}{c}
\dd_\bx F \\ \dd_t F \end{array} \right)
$$
is a normal vector to the level of $\br_M$ through the point $(\bx,t)$. We see that 
$V \circ \gm \rightarrow \left( \begin{array}{c} 0  \\ \pm 1 \end{array} \right)$,
since 
$$(|\bx|\cdot \left|\frac{\dd_\bx F}{\nabla F}\right|) \circ\gm \to 0 \, , \; 
\left|\frac{\dd_\bx F}{\nabla F}\right| \circ\gm \to 0 \; \mbox{ and } \;
\frac{\dd_t F}{\vert \nabla F \vert} \to \pm 1.
$$ 
This contradicts the observation made just above.
\end{proof}

\bigskip
Since we just have seen that $t$-equisingularity at infinity implies the Malgrange condition, we need 
to check if there is a relation between these and sphericalness at infinity. 
To this end an obvious corollary of Proposition \ref{prop:spherical-exponent} is the following:
\begin{corollary}\label{cor:t-reg}
Let $c$ not be a generalized critical value. Then $\bt_M$ is horizontally spherical at $c$ at infinity. 
In other words $t$-equisingularity at infinity at $c$ implies horizontal sphericalness at $c$ at infinity.
\end{corollary}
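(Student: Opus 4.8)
The plan is to deduce this straight from the equivalent form of horizontal sphericalness established in Proposition \ref{prop:spherical-exponent}, together with Proposition \ref{prop:t-reg-malg} for the reformulation. First I would unwind the hypothesis: to say that $c$ is not a generalized critical value means $c \notin K_0(\bt_M) \cup K_\infty(\bt_M)$, so on the one hand $c$ is a regular value of $\bt_M$ — which is exactly what is needed for horizontal sphericalness at $c$ at infinity to be meaningful — and on the other hand $\bt_M$ satisfies the Malgrange condition at $c$ (Definition \ref{def:malgrange-acv}), i.e.\ there are positive constants $R$, $\ve$, $A_c$ with $|\bx| > R$ and $|t - c| < \ve$ forcing $|\bx|\cdot|\nbt_M(\bx,t)| \ge A_c$.

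Next I would convert this uniform lower bound into the \L ojasiewicz-type inequality \eqref{eq:bochnak-Lojasiewicz-exponent}. I would fix an exponent $e_c \in \F_\cM \cap\, ]0,1[$ — such an exponent exists since $\F_\cM$ is a subfield of $\R$, hence contains $\Q$, so one may take $e_c = \tfrac12$ — and shrink $\ve$ so that $\ve \le 1$. Then on $\bt_M^{-1}([c-\ve,c+\ve]) \setminus \bB_R$ one has $|\bt_M(\bx,t) - c|^{e_c} \le \ve^{e_c} \le 1$, whence $|\bx|\cdot|\nbt_M(\bx,t)| \ge A_c \ge A_c\,|\bt_M(\bx,t) - c|^{e_c}$. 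So \eqref{eq:bochnak-Lojasiewicz-exponent} holds with $E_c = A_c$, and Proposition \ref{prop:spherical-exponent} gives that $\bt_M$ is horizontally spherical at $c$ at infinity, which is the first assertion.

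For the reformulation, I would take $c$ a regular value of $\bt_M$ such that the family $\{T_t\}_{t\in\R}$ is $t$-equisingular at infinity at $c$. By Proposition \ref{prop:t-reg-malg}, $\bt_M$ then satisfies the Malgrange condition at $c$, i.e.\ $c \notin K_\infty(\bt_M)$; combined with $c \notin K_0(\bt_M)$, this says precisely that $c$ is not a generalized critical value, so the first part applies and yields horizontal sphericalness at $c$ at infinity.

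I do not anticipate any real obstacle here: all the substance is already packaged in Propositions \ref{prop:spherical-exponent} and \ref{prop:t-reg-malg}. The only point requiring a little care is the choice of $e_c$ — it must be a genuine element of the field $\F_\cM$ lying strictly below $1$, and taking it \emph{strictly positive} (rather than $0$) avoids any ambiguity of the form $0^0$ at those points of the strip where $\bt_M$ takes the value $c$ exactly.
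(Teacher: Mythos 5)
Your argument is correct and is essentially the paper's own proof: the authors likewise observe that the Malgrange condition at $c$ is just Inequality (\ref{eq:bochnak-Lojasiewicz-exponent}) with an exponent $e_c\le 0<1$ and then invoke Proposition \ref{prop:spherical-exponent}, with Proposition \ref{prop:t-reg-malg} supplying the passage from $t$-equisingularity to Malgrange. Your only deviation --- taking $e_c=\tfrac12$ and shrinking $\ve\le 1$ instead of $e_c\le 0$ --- is a cosmetic variant of the same reformulation.
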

\begin{proof}
It is just reformulating the fact that Malgrange at $c$ is equivalent to have 
$e_c \leq 0$ in Equation (\ref{eq:bochnak-Lojasiewicz-exponent}).
\end{proof}
We can now state the last result of this section about local triviality:
\begin{theorem}\label{thm:spherical-trivial}
Let $c$ be a value at which $\bt_M$ is horizontally spherical at infinity. Then $\bt_M$ is $C^{1+m}$ is 
locally trivial at $c$.
\end{theorem}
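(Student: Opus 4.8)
The plan is to carry out, almost verbatim, the flow argument used to prove part~(iv) of the trivialization theorem above: one trivializes $\bt_M$ over a small interval around $c$ by the flow of the rescaled gradient $\chi := |\nbt_M|^{-2}\,\nbt_M$, whose trajectories $\gm$ satisfy $\bt_M(\gm(s))=\bt_M(\gm(0))+s$. The only difference is that the Malgrange lower bound is no longer available, only the weaker bound coming from horizontal sphericalness. So first I would appeal to Proposition~\ref{prop:spherical-exponent}: since $c\notin K_0(\bt_M)$ and $K_0(\bt_M)$ is finite, after shrinking $\ve>0$ (so that $[c-\ve,c+\ve]$ contains no critical value of $\bt_M$) and enlarging $R>0$, Inequality~(\ref{eq:bochnak-Lojasiewicz-exponent}) holds on $\bt_M^{-1}([c-\ve,c+\ve])\setminus\bB_R$ with some exponent $e_c<1$ and some $E_c>0$; and since $|t-c|^{e_c}\ge 1$ whenever $|t-c|\le\ve\le 1$ and $e_c<0$, I may replace $e_c$ by $\max(e_c,0)$ and assume $0\le e_c<1$. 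On the compact set $\bt_M^{-1}([c-\ve,c+\ve])\cap\bB_R$, which is disjoint from the closed set $\crit(\bt_M)$, the function $|\nbt_M|$ admits a positive lower bound $m_0$; putting the two estimates together, for every $(\bx,t)$ with $|\bt_M(\bx,t)-c|\le\ve$ one gets
\[
\frac{1}{|\nbt_M(\bx,t)|}\ \le\ \frac{1}{m_0}+\frac{|\bx|}{E_c\,|t-c|^{e_c}}.
\]

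Then I would integrate this along a trajectory $\gm$ of $\chi$ issued from a point of $M_c$. As $\bt_M(\gm(z))=c+z$ one has $|t-c|=|z|$ along $\gm$, so with $\varphi(s):=|\gm(s)|$ the usual estimate $|\gm(s)|\le|\gm(0)|+\int_0^{|s|}|\nbt_M(\gm(z))|^{-1}\,dz$ becomes, for $0\le s<\ve$,
\[
\varphi(s)\ \le\ \varphi(0)+\frac{\ve}{m_0}+\int_0^{s}\frac{\varphi(z)}{E_c\,z^{e_c}}\,dz,
\]
and symmetrically for $-\ve<s\le 0$. Because $e_c<1$, the kernel $z\mapsto (E_c z^{e_c})^{-1}$ is integrable on $[0,\ve]$, so the Gronwall Lemma yields the uniform bound $\varphi(s)\le\big(\varphi(0)+\ve/m_0\big)\exp\!\big(\ve^{\,1-e_c}/(E_c(1-e_c))\big)$. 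Hence no maximal trajectory of $\chi$ starting on $M_c$ can escape to infinity before $|s|$ reaches $\ve$.

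It then remains to conclude as in the proof of~(iv): the flow of $\chi$ is defined on $M_c\times{]-\ve,\ve[}$, and since $\chi$ is $C^{1+m}$ and nowhere vanishing on $\bt_M^{-1}({]c-\ve,c+\ve[})$, this flow is a $C^{1+m}$ diffeomorphism $M_c\times{]-\ve,\ve[}\to\bt_M^{-1}({]c-\ve,c+\ve[})$ carrying $\bt_M$ onto $(\bp,s)\mapsto c+s$, which is exactly local $C^{1+m}$ triviality at $c$ in the sense of Definition~\ref{def:trivial}. The point I expect to be the real obstacle — the only genuine difference with the Malgrange case — is that the lower bound~(\ref{eq:bochnak-Lojasiewicz-exponent}) degenerates at $z=0$ precisely when $c$ is allowed to be an asymptotic critical value (i.e.\ when $0<e_c<1$), turning the Gronwall estimate into a \emph{singular} one; everything hinges on the strict inequality $e_c<1$, which keeps the singular kernel integrable and the trajectories bounded.
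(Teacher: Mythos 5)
Your proposal is correct and follows essentially the same route as the paper: integrate the rescaled gradient field $\chi=\nbt_M/|\nbt_M|^2$ over a slab of regular values and use Inequality~(\ref{eq:bochnak-Lojasiewicz-exponent}) together with the Gronwall Lemma, with integrability of the singular kernel $z^{-e_c}$ (i.e.\ $e_c<1$) keeping the trajectories issued from $M_c$ bounded. The only difference is cosmetic: the paper absorbs the compact region by shifting the origin of $\Rn$, whereas you bound $|\nbt_M|$ below on it separately, which is an equally valid way of filling in the paper's sketch.
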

\begin{proof}
Once we have moved the origin of $\Rn\times 0$ so that its value is not $c$, we just have to integrate
the field $\chi = \frac{1}{|\nbt_M|}\nubtm$ as before. Inequality (\ref{eq:bochnak-Lojasiewicz-exponent})
now holds in $\bt_M^{-1}[c-\ve,c+\ve] \setminus \bB_{R_0}$ for a large positive $R_0$. As in \cite{DaGr1,DaGr2} combining 
it with Gronwall Lemma will show that any trajectory of $\chi$ parameterized over $[0,\ve]$ with initial point in $M_c\cap\bB_R$
stays in $\bB_{KR}$ for some constant $K$ depending only on $c$ and $\ve$.
\end{proof}
As a final remark, there are polynomial examples in \cite{DaGr1} with regular values which are ACV,
but with exponent $e_c < 1$.
%
%
%
%
%
%
%
%
%
%
%
%
%
%
%
%
%
%
%
%
%
%
%
%
%
%
%
%
%
%
%
%
%
\section{Curvature and absolute curvature of families of globally definable hypersurfaces}\label{section:curvature}
Some of the material presented here can also be found in \cite{Gra1} (or adapted from it).

\medskip
Let $H$ be a globally definable and oriented hypersurface of $\Rn$ of class $C^{1+m}$ with $m\geq 1$.

Assume that now $H$ is connected and let $\nu_H: H\mapsto \bS^{n-1}$ be an orientation. The unitary normal 
mapping $\nu_H$ is globally definable and $C^m$. 

Assume that the maximal rank of $\ud_\bx\nu_H$ when $\bx$ ranges $H$ is $n-1$. 

There exist finitely many definable disjoint connected open subsets $(U_i)_{i\in I}$
of $\bS^{n-1}$ such that 
$$
\clos(\nu_H(H)) = \cup_{i\in I} \clos(U_i)
$$
and for each $i\in I$, the mapping $\nu_H$ induces a globally definable finite covering 
$$
\nu_H :H_i \mapsto U_i
$$ 
where $H_i := \nu_H^{-1}(U_i)$ and such that 
$$
\dim \nu_H(H\setminus (\cup_{i\in I} H_i)) \leq n-2.
$$
Denoting $\kp_H$ the determinant of $\ud \nu_H$, that is the Gauss-Kronecker curvature of $M$ at the considered point,
the \em total Gauss curvature $K(H)$ of $H$ \em is defined (if it exists, and it does as we see below) as
$$
K(H) := \int_H \kp_H(\bx) \ud \bx.
$$
An application of the formula of change of variables gives
$$
\int_H \kp_H(\bx) \ud \bx = \sum_{i\in I} (-1)^{d_i} \vol_{n-1}(U_i)
$$
for $(-1)^{d_i}$ the degree of the covering mapping $\nu_H|_{H_i}:H_i\mapsto U_i$ for each $i$. 

\smallskip
We introduce another average of curvature, namely the \em total absolute curvature
$|K|(H)$ of $H$ \em defined as
$$
|K|(H):= \int_H |\kp_H(\bx)| \ud \bx
$$
Another application of the formula of change of variables yields, 
$$
\int_H |\kp_H(x)| \ud x = \sum_{i\in I} e_i \cdot\vol_{n-1}(U_i)
$$
where $e_i$ is the number of sheets of the covering $\nu_H|_{H_i}:H_i\mapsto U_i$. 

The hypothesis on the rank of $\ud \nu_H$ guarantees that $e_i$ is positive. Otherwise
both curvatures are $0$.

\bigskip
Returning to the notations and hypotheses of Section \ref{section:regularity}, 
the hypersurface $M$ can also be seen as a globally definable family of hypersurfaces $\cF_{\bt_M} := (T_c)_{c\in Im(\bt_M)}$ of
$\Rn$. We can define the following mapping:
$$
\begin{array}{rccl}
\bN : & M\setminus\crit(\bt_M) & \mapsto & \bS^{n-1} \\
 & (\bx,t) & \mapsto & \frac{\nu_M^\bx}{|\nu_M^\bx|} 
\end{array}.
$$
The mapping $N$ is called \em the Gauss mapping of the family $\cF_{\bt_M}$. \em
It is globally definable and $C^{1+m}$. The restriction of $N|_{T_c}$ is denoted $\bN_c$, so that
the family of mappings $(\bN_c)_{c\in Im(\bt_M)\setminus K_0(\bt_M)}$ is globally definable, where
$Im(\bt_M)$ is the image of the function $\bt_M$.
Let $\kp_c$ be the Gauss-Kronecker curvature of $T_c$.
Thus we can define two functions
$$
\begin{array}{rccl}
K : &  Im(\bt_M)\setminus K_0(\bt_M) & \to & \R \\
 & c & \mapsto & K(c) := \int_{T_c} \kp_c(\bp) \ud \bp, \\
|K| : &  Im(\bt_M)\setminus K_0(\bt_M) & \to & \R \\
 & c & \mapsto & |K|(c) := \int_{T_c} |\kp_c|(\bp) \ud \bp .
\end{array}
$$
The introductory material of this section guarantees that 
both functions are well defined. The paper \cite{Gra1} has dealt with the case
where $M$ is a graph. We can state now the result of this section:
\begin{theorem}\label{thm:continuity-curvatures}
(i) There are finitely many values in $Im(\bt_M) \setminus K_0(\bt_M)$ at which 
the function $t \mapsto K(t)$ is not continuous 

\smallskip\noindent
(ii) There are finitely many values in $Im(\bt_M) \setminus K_0(\bt_M)$ at which 
the function $t \mapsto |K|(t)$ is not continuous 

\smallskip\noindent
(iii) If $|K|$ is continuous at $c$, so is $K$.
\end{theorem}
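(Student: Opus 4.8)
\medskip\noindent\textbf{Proof strategy.}
The plan is to read off all three assertions from the degree/multiplicity description of the two curvature integrals, in the spirit of \cite{Gra1}. For a regular value $c$ of $\bt_M$ with $c\notin K_0(\bt_M)$, and for $\bu\in\bS^{n-1}$ a regular value of the Gauss mapping $\bN_c$, put
\[
\nu(c,\bu):=\#\,\bN_c^{-1}(\bu)\,,\qquad \mu(c,\bu):=\sum_{\bp\in\bN_c^{-1}(\bu)}\mathrm{sgn}\bigl(\kp_c(\bp)\bigr)\,.
\]
By the change of variables formula recalled in Section \ref{section:curvature}, for every such $c$,
\[
|K|(c)=\int_{\bS^{n-1}}\nu(c,\bu)\,\ud\bu\,,\qquad K(c)=\int_{\bS^{n-1}}\mu(c,\bu)\,\ud\bu\,.
\]
The pair $(\nu,\mu)$ is definable and integer valued off a definable set meeting each slice $\{c\}\times\bS^{n-1}$ in dimension $\le n-2$, and since $\{\bN_c^{-1}(\bu)\}_{(c,\bu)}$ is a definable family of finite sets, its cardinalities are bounded by a single $N\in\N$. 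Hence $\nu,\mu$ are bounded; this reproves that $K$ and $|K|$ are well defined and gives $|K|\le N\,\vol_{n-1}(\bS^{n-1})$.

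For (i) and (ii): for each integer $j$ consider the definable families $V_j(c):=\{\bu\in\bS^{n-1}:\nu(c,\bu)=j\}$ and $W_j(c):=\{\bu\in\bS^{n-1}:\mu(c,\bu)=j\}$, finitely many of which are non-empty, so that $|K|(c)=\sum_{j}j\,\vol_{n-1}(V_j(c))$ and $K(c)=\sum_{j}j\,\vol_{n-1}(W_j(c))$. By Hardt triviality of definable families applied over the one-dimensional parameter, there is a finite set $D\subset\R$ such that over each connected component $J$ of $\bigl(Im(\bt_M)\setminus K_0(\bt_M)\bigr)\setminus D$ every family $\{V_j(c)\}_{c\in J}$ and $\{W_j(c)\}_{c\in J}$ is definably trivial; over such a $J$ the functions $c\mapsto\vol_{n-1}(V_j(c))$ and $c\mapsto\vol_{n-1}(W_j(c))$ are continuous, hence so are $|K|$ and $K$, and each therefore has at most finitely many discontinuities. (Equivalently, and closer to \cite{Gra1}: the jumps of $|K|$ and $K$ can only occur at $K_0(\bt_M)$ or at one of the finitely many values $c$ at which some preimage of a fixed direction under the family Gauss mapping $N$ escapes to infinity as $t\to c$.)

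For (iii): let $c$ be a regular value at which $|K|$ is continuous. By the Local Conical Structure Theorem and Lemma \ref{lem:transverse-horizontal-sphere}, for every large $R$ the cylinder $\bS_R^{n-1}\times\R$ is transverse to $M$ over a neighbourhood of $c$, so $\bS_R^{n-1}$ is transverse to $T_t$ for $t$ near $c$; write
\[
|K|(t)=|K|_{\le R}(t)+|K|_{>R}(t)\,,\qquad |K|_{\le R}(t):=\int_{T_t\cap\bB_R}|\kp_t|\,,
\]
and likewise $K(t)=K_{\le R}(t)+K_{>R}(t)$. The truncated integrals $|K|_{\le R}$, $K_{\le R}$ are continuous at the regular value $c$ (smooth compact families varying continuously), while $|K|_{>R}(c)\to 0$ as $R\to\infty$ because $|K|(c)<\infty$. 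Continuity of $|K|$ at $c$ then forces $|K|_{>R}(t)\to|K|_{>R}(c)$ as $t\to c$, and since $|K_{>R}(t)|\le|K|_{>R}(t)$ for all $t$,
\[
\limsup_{t\to c}|K(t)-K(c)|\;\le\;\limsup_{t\to c}|K_{>R}(t)|+|K_{>R}(c)|\;\le\;2\,|K|_{>R}(c)\,;
\]
letting $R\to\infty$ gives $\lim_{t\to c}K(t)=K(c)$.

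The main obstacle is the step used in (i)--(ii): one must know that the $(n-1)$-dimensional volume of a uniformly bounded definable family of subsets of $\bS^{n-1}$ varies continuously off a finite subset of the one-dimensional parameter. This is where polynomial boundedness of $\cM$ enters, through the good behaviour of parametrised integrals of definable functions (the mechanism behind the continuity-of-density results quoted in the introduction): one may take the trivialisations over the intervals $J$ to be Lipschitz, after which the volume functions are continuous on $J$; equivalently, these volume functions are definable in an o-minimal expansion of $\cM$ and the monotonicity theorem bounds their discontinuities. The remaining inputs --- uniform transversality of $T_t$ to large spheres (Lemma \ref{lem:transverse-horizontal-sphere}) and continuity of curvature integrals over smooth compact truncations --- are routine.
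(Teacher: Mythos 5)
Your treatment of (i)--(ii) is essentially the paper's own approach: the paper works with the map $\Psi=(\bN,\bt_M)$, its critical image $\Delta$, and the definable partition of $\bS^{n-1}$ into the sets $U_{t,k}$ of directions with exactly $k$ preimages (on which the degree of $\bN_t$ is locally constant), then reduces everything to the constancy over intervals of the combinatorial data together with the continuity there of $t\mapsto\vol_{n-1}(V_{t,i})$; your level sets $V_j(c),W_j(c)$ of the counting functions $\nu,\mu$ are the same device in slightly different clothing. For (iii), however, you take a genuinely different route: the paper (following \cite{Gra1}) extracts it from the Hausdorff-limit bookkeeping, in particular the inequality $|K|(c)\leq\min(|K|_c^-,|K|_c^+)$ and the comparison of multiplicities $k_i$ with degrees $\delta_i$, whereas you truncate at a sphere $\bS_R^{n-1}$ and control the tail via $|K_{>R}(t)|\leq|K|_{>R}(t)$. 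That argument is correct (modulo the routine check that for suitable $R$ and $t$ close to the regular value $c$ the truncated levels $T_t\cap\bB_R$ form a continuously varying compact family, which needs a transversality statement for the fibres $T_t$ and the sphere, slightly more than Lemma \ref{lem:transverse-horizontal-sphere} literally gives), and it is arguably more transparent and self-contained than the paper's reference to \cite{Gra1}.

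The step you must repair is your justification of the continuity of $c\mapsto\vol_{n-1}(V_j(c))$ on the intervals $J$. A Hardt trivialization is only a homeomorphism and says nothing about volume; a bi-Lipschitz trivialization with constant $L$ gives comparability of $(n-1)$-volumes up to $L^{n-1}$ but not continuity, since nothing forces the Lipschitz constants to tend to $1$ as the parameter varies; and the definability of parametrized volumes in some o-minimal expansion is a theorem for globally subanalytic families (Lion--Rolin, Comte--Lion--Rolin) but is not available for an arbitrary polynomially bounded o-minimal structure, so it cannot be invoked here. The fact you need is nonetheless true, elementary, and independent of polynomial boundedness: take a cell decomposition of the total family $\{(c,\bu)\,:\,\bu\in V_j(c)\}\subset\R\times\bS^{n-1}$ compatible with a finite partition of the $c$-line; over each open interval the fibre volume is an iterated integral of continuous definable functions with uniformly bounded fibres, hence continuous by dominated convergence (equivalently, one can argue as in \cite{Gra1}, and as the paper does, via Hausdorff limits of the closures $\clos(U_{t,k})$, whose boundaries are of dimension at most $n-2$ and hence negligible). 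Polynomial boundedness is used elsewhere in the paper (for the asymptotic expansions with exponents in $\F_\cM$, e.g.\ Proposition \ref{prop:spherical-exponent}), not for this volume-continuity step, so attributing the ``main obstacle'' to it is misplaced.
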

\begin{proof}[Sketch of Proof]
It is a very similar proof to that of \cite[Sections 4,5,6]{Gra1}.
 
\smallskip
Let us consider the following globally definable and $C^{1+m}$ mapping 
$$
\begin{array}{rccl}
\Psi : & M & \mapsto & \bS^{n-1}\times \R \\
 & \bp & \mapsto & (\bN(\bp),\bt_M(\bp)).
\end{array}
$$ 
It is a local diffeomorphism at any point of $M \setminus \crit(\Psi)$.
Let $\Delta := \Psi(\crit(\Psi))$ which is definable, closed and of dimension lower than 
or equal to $n-1$. Let $\cU :=( \bS^{n-1}\times \R) \setminus \Delta$.
\\
There exists an integer number $p_M$ such that for any $(\bu,t)\in \cU$ the 
fibre $\Psi^{-1}(\bu,t)$ has at most $p_M$ points.
For any point $(\bu,t)$ in $\cU$ the degree $\delta (\bu,t)$ of $\Psi$
at $(\bu,t)$ may range from $-p_M$ to $p_M$. 
In particular the function $(\bu,t) \mapsto \delta(\bu,t)$ is definable and 
$$
\delta (\bu,t) = \deg_\bu \bN_t.
$$
We define the following subsets 
\begin{eqnarray*}
\cU_k & := & \{(\bu,t) \in \cU \, : \, \# \Psi^{-1}(\bu,t) = k  \} \\
U_t & := & \{\bu \in \bS^{n-1} \, : \, (\bu,t) \in\cU  \} = \bN_t(T_t\setminus\crit(\bN_t)) \\
U_{t,k} & := & \{\bu \in \bS^{n-1} \, : \, (\bu,t) \in\cU_k  \} .
\end{eqnarray*}
The subsets $U_t$ and $U_{t,k}$ are open, and we obtain finitely many globally definable
families $(U_t)_{t\in Im(\bt_M)\setminus K_0(\bt_M)}$ and 
$(U_{t,k})_{t\in Im(\bt_M)\setminus K_0(\bt_M)}$.

Note that $U_t = \cup_k U_{t,k}$ and since the function $\bu \mapsto \deg_\bu \bN_t$ is definable,
it is constant on each connected component of $U_t$. 

Let $c$ be a regular value of $\bt_M$. Since Hausdorff limits of closed definable
subsets of a given compact space exist, we can set
\begin{center}
\begin{tabular}{rcl}
$\scrV_c^+  :=  \lim_{t\to c,t>c} \clos (U_t)$ & and & 
$\scrV_{c,k}^+   :=  \lim_{t\to c,t>c} \clos (U_{t,k})$ \\
$\scrV_c^-  :=  \lim_{t\to c,t<c} \clos (U_t)$ & and & 
$\scrV_{c,k}^-  :=  \lim_{t\to c,t<c} \clos (U_{t,k}).$
\end{tabular}
\end{center}

\smallskip
Let $V_1,\ldots,V_{d_c}$ be the connected components of $U_c$. For each $i = 1,\ldots,s,$
let $k_i$ be the integer number such that $V_i \subset U_{c,k_i}$. 
For each $i=1,\ldots,d_c$ there exists $l_i^+ \geq k_i$ and $l_i^- \geq k_i$
such that 
$$
V_i \subset \scrV_{c,l_i^-}^- \; \mbox{ and } \; V_i \subset \scrV_{c,l_i^+}^+.
$$
In particular we deduce that for each $i$
$$
\vol_{n-1} (V_i) \leq \min \{ \, \vol_{n-1} (\scrV_{c,l_i^-}^-) \, , \,\vol_{n-1} (\scrV_{c,l_i^+}^+ ) \, \} \, .
$$
Let $\delta_i$ be the degree of $\bN_c$ at any point of $V_i$. We find
$$
K(c) = \sum_{i=1}^{d_c} \delta_i \cdot \vol_{n-1}(V_i) 
\; \mbox{ and } \;
|K|(s) = \sum_{i=1}^{d_c} k_i\cdot \vol_{n-1} (V_i).
$$
From the previous arguments we get that each following limit exists
$$
K_c^+ := \lim_{t\to c,t>c} K(t)\, , \;
K_c^- := \lim_{t\to c,t<c} K(t)\, , \;
|K|_c^+ := \lim_{t\to c,t>c} |K|(t)\, , \;
|K|_c^- := \lim_{t\to c,t<c} |K|(t)\, , \;
$$
and we obviously get 
$$
|K|(c) \leq \min (|K|_c^- \, , \, |K|_c^+ ).
$$
The rest of the proof follows from the following
arguments:
Assume that each $U_t$ has $d_t$ connected $V_{t,1},\ldots,V_{t,d_t}$.
Each such connected component $V_{t,i}$ lies in $U_{t,k_i(t)}$ 
with $k_i(t) \leq k_j(t)$ if and only if $i\leq j$. Moreover
the degree of $\bN_t$ at any point of $V_{t,i}$ is constant and equal to $\delta_i(t)$.
These comes from properties of $\Psi$ and $\cU$.
From here we deduce that there exists a finite subset $Z$ of $\R$ such that
for any $J$ connected component of $(\R\setminus Z)\cap Im(\bt_M)$, 
the numbers $d_t, k_i(t), \delta_i(t)$ are independent of $t$ in $J$.
Moreover each function $t \mapsto \vol_{n-1}(V_{t,i})$ is continuous
over $J$.
\end{proof}
%
%
%
%
%
%
%
%
%
%
%
%
%
%
%
%
%
%
%
%
%
%
%
%
%
%
%
%
%
%
%
%
\section{More on regularity at infinity}\label{section:more}

\medskip
Let $\bN : M\setminus \crit(\bt_M)\mapsto \bS^{n-1}$ be the Gauss mapping of the family of the regular levels of $\bt_M$.
Similarly to the conormal geometry at infinity (in $\Rn\times\R$) of the function $\bt_M$, we are interested in the 
limits of $\bN$ at infinity (in $\Rn$). 

\smallskip
Let $\Gm(\bN)$, contained in $M\times\bS^{n-1}$, be the graph of $\bN$, let $\overline{\Gm(\bN)}$ be its closure in 
$\Rnbar\times\R\times\bS^{n-1}$ and $\overline{\bN} : \overline{\Gm(\bN)} \mapsto\bS^{n-1}$ be the projection onto
$\bS^{n-1}$, so that we can think of it as the extension by continuity of $\bN$ to $\overline{\Gm(\bN)}$.

The closed definable subset $T_{c,+}^\infty$ is defined as 
$$
T_{c,+}^\infty := \{\bu\in\bS_\infty^{n-1} \, : \, \exists (\bp_k)_k \in M \, \mbox{ such that } \, \lim_\infty \bp_k = (\bu,c) \} .
$$

Let $V_c^\infty := \overline{\bN}(\pi^{-1}(T_{c,+}^\infty\times \{c\}))$, in other words it is the definable closed subset 
$$
V_c^\infty =\left\{\bv\in\bS^{n-1} \, : \, \exists ((\bx_k,\tau_k))_k \in M \hbox{ such that } \bx_k \to \infty, \, \tau_k \to c,
\, \bN (\bx_k,\tau_k)  \to \bv \right\},
$$
corresponding to all the limits at infinity of normals to the hypersurfaces $(T_t)_t$ as $t$ tends to $c$.

For each $\bu\in\bS_\infty^{n-1}$, let $V_{c,\bu}^\infty := V_c^\infty \cap \overline{\bN}(\pi^{-1}(\bu)\times \{c\})$, that
is 
$$
V_{c,\bu}^\infty =\left\{\bv\in\bS^{n-1} \, : \, \exists ((\bx_k,\tau_k))_k \in M \hbox{ such that }  \bx_k \to \infty, \, \tau_k \to c,
\, \frac{\bx_k}{|\bx_k|} \to\bu, \, \bN(\bx_k,\tau_k) \to\bv \right\}.
$$
Note that whenever $\bu$ does not belong to $T_{c,+}^\infty$ we find that $V_{c,\bu}^\infty$ is empty.

\smallskip
A very rigid consequence of $\bt_M$ being horizontally spherical at $c$ at infinity is the following:
\begin{lemma}\label{lem:normal-infty}
Let $c$ be a regular value taken by $\bt_M$ at which it is horizontally spherical at infinity. 
Then each $\bu$ in $T_{c,+}^\infty$ and each $\bv$ in $V_{c,\bu}^\infty$ are orthogonal.
\end{lemma}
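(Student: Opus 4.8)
The plan is to observe that Lemma \ref{lem:normal-infty} is, up to unwinding notation, nothing but a restatement of Definition \ref{def:spher-infty} read through the sets $T_{c,+}^\infty$ and $V_{c,\bu}^\infty$; so the proof reduces to matching the quantifiers on the two sides and checking that $\bN$ is available where we need it.

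First I would fix $\bu \in T_{c,+}^\infty$ and $\bv \in V_{c,\bu}^\infty$, and use the explicit description of $V_{c,\bu}^\infty$ recalled just before the statement to produce a sequence $\bp_k = (\bx_k,\tau_k)$ of points of $M$ with $|\bx_k| \to +\infty$, $\tau_k \to c$, $\bx_k/|\bx_k| \to \bu$ and $\bN(\bp_k) \to \bv$ in $\bS^{n-1}$. In particular $\bp_k$ converges in $\Mbar$ to $(\bu,c)$, which therefore lies in $M^\infty$. Next I would check that $\bN$ is well defined along the tail of $(\bp_k)_k$: since $c$ is a regular value and $K_0(\bt_M)$ is finite, there is $\ve>0$ with $]c-\ve,c+\ve[\,\cap\,K_0(\bt_M)=\emptyset$, hence $\bt_M(\bp_k)=\tau_k\notin K_0(\bt_M)$ for $k$ large, whence $\bp_k \notin \crit(\bt_M)$ and $\bN(\bp_k)=\frac{\nu_M^\bx}{|\nu_M^\bx|}(\bp_k)$. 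Discarding the finitely many initial indices (where $\nu_M^\bx$ could vanish) changes neither the limit point $(\bu,c)$ nor the limit $\bv$.

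Finally, since $\bt_M$ is horizontally spherical at $c$ at infinity and $(\bp_k)_k$ converges to $(\bu,c)\in M^\infty$, Definition \ref{def:spher-infty} tells us that every accumulation value of the unit field $\frac{\nu_M^\bx}{|\nu_M^\bx|}$ along $(\bp_k)_k$ is orthogonal to $\bu$; as $\bN(\bp_k)\to\bv$, the vector $\bv$ is such an accumulation value, so $\langle \bv,\bu\rangle=0$. Since $\bu\in T_{c,+}^\infty$ and $\bv\in V_{c,\bu}^\infty$ were arbitrary, this is exactly the asserted orthogonality. I do not anticipate a genuine obstacle here: the statement carries no content beyond Definition \ref{def:spher-infty}, and the only points worth a word of care are the well-definedness of $\bN$ near the regular level $c$ (settled by the finiteness of $K_0(\bt_M)$) and the identification of the set of accumulation values of $\bN$ along sequences tending at infinity to $(\bu,c)$ with the set $V_{c,\bu}^\infty$.
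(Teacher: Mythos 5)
Your proof is correct and matches the paper's approach: the paper dismisses this lemma as ``Obvious from the definition of the horizontal sphericalness,'' and your write-up simply makes that unwinding of Definition \ref{def:spher-infty} explicit, including the (harmless) check that $\bN$ is defined along the tail of the sequence.
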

\begin{proof}
Obvious from the definition of the horizontal sphericalness.
\end{proof}

\medskip
Let $c$ be a regular value taken by $\bt_M$ at which it is horizontally spherical at infinity.
Let $\ve$ be a positive real number such that for each $t \in [c-\ve,c+\ve]$ the function $\bt_M$ is 
horizontally spherical at $t$ at infinity. Let $T_{c,\ve} := \bt_M^{-1}([c+\ve,c-\ve])$.

We find that for each for $\eta$ in $]e_c,1[$, there exists a positive real number $R$ such that
for every $(\bx,t)$ belonging to $T_{c,\ve}\setminus \clos(\bB_R)$, we have 
\begin{equation}\label{eq:exponent}
|\bx|\cdot|\nbt_M (\bx,t)| \geq | t - c|^\eta .
\end{equation}

Let $\xi$ be the following definable vector field
$$
\xi := \frac{1}{|\nbt_M|}\frac{\nu_{\bt_M}^\bS}{|\nu_{\bt_M}^\bS|}, \; 
\mbox{ for } |(\bx,t)| \geq R \gg 1, \; \mbox{ and } |t-c|\leq \ve.
$$
It is definable and $C^{1+m}$, non vanishing, tangent to the Euclidean spheres. 
The flow of the differential equation 
$$
\dot{\bp}(t) = \xi(\bp(t)) \; \mbox{ and } \; \xi(0) \in T_c\times \{c\}\setminus \bB_R
$$
induces a $C^{1+m}$ diffeomorphism $(T_c\times\{c\}\setminus \bB_R)\times[-\ve,\ve] \mapsto T_{c,\ve}\setminus \bB_R$. 

\smallskip
Using Inequality (\ref{eq:exponent}) we deduce that the length $l(\bp_0,\bp_t)$ of the trajectory of 
$\xi$ between the point $\bp_0$ of $T_c\times\{c\}\setminus \bB_R$ and $\bp_t$, point reached after time $t$, is bounded as 
\begin{equation}\label{eq:gronwall}
l(\bp_0,\bp_t) \leq |\bp_0| \left(\frac{t^{1-\eta}}{1-\eta}\right) .
\end{equation}

Inequality (\ref{eq:gronwall}) implies that the angle $\tht(t)$ between the vector $\bp_t$ and 
$\bp_0$ tends to $0$ as $t$ goes to $0$. This proves the following:
\begin{lemma}\label{lem:small-tangent-cone}
Let $c$ be a regular value taken by the function $\bt_M$ at which it is horizontally spherical at infinity. Then 
$T_c^\infty = T_{c,+}^\infty$, thus $T_{c,+}^\infty$ is of dimension at most $n-2$.
\end{lemma}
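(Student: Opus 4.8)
The inclusion $T_c^\infty \subseteq T_{c,+}^\infty$ is free of charge: a sequence of $T_c$ tending to infinity in a direction $\bu$ lifts to the sequence $(\by_k,c)$ of $M_c \subseteq M$, which converges in $\Mbar$ to $(\bu,c)$, so $\bu \in T_{c,+}^\infty$. Hence the whole content is the reverse inclusion $T_{c,+}^\infty \subseteq T_c^\infty$. The plan is to take a sequence of $M$ realizing a given direction of $T_{c,+}^\infty$, to push each of its points back onto the exact level $M_c$ along the flow of the field $\xi$ constructed above, and to check, via the angular estimate derived from Inequality (\ref{eq:gronwall}), that this operation leaves the limiting radial direction unchanged.

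So let $\bu \in T_{c,+}^\infty$ and pick $\bp_k = (\bx_k,t_k) \in M$ with $|\bx_k| \to \infty$, $t_k \to c$ and $\bx_k/|\bx_k| \to \bu$. For all large $k$ one has $|\bp_k| > R$ and $|t_k - c| < \ve$, so $\bp_k$ lies in $T_{c,\ve}\setminus\clos(\bB_R)$; using the diffeomorphism produced just before the statement (the flow of $\xi$ sends $(M_c\setminus\bB_R)\times[-\ve,\ve]$ onto $T_{c,\ve}\setminus\bB_R$), we write $\bp_k$ as the point obtained by following $\xi$, starting from a point $\bz_k = (\by_k,c) \in M_c\setminus\bB_R$, between the levels $c$ and $t_k$. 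Since $\xi$ is tangent to the Euclidean spheres of $\Rn\times\R$, the norm is constant along that trajectory, so $|\bz_k| = |\bp_k| \to \infty$; writing $\bz_k = (\by_k,c)$, this gives $|\by_k|^2 = |\bp_k|^2 - c^2 \to \infty$, and $\by_k = \pi_M(\bz_k) \in T_c$.

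We now combine the two available estimates. On the one hand, as $|\bx_k| \to \infty$ while $(t_k)$ is bounded, $\bp_k/|\bp_k| \to (\bu,0)$ in $\bS^n$. On the other hand, by the consequence of Inequality (\ref{eq:gronwall}) recorded above, the angle between $\bp_k$ and $\bz_k$ is bounded by $|t_k-c|^{1-\eta}/(1-\eta)$, hence tends to $0$. Therefore $\bz_k/|\bz_k| \to (\bu,0)$ as well, and since $\bz_k = (\by_k,c)$ with $|\by_k| \to \infty$ this forces $\by_k/|\by_k| \to \bu$. As $\by_k \in T_c$ with $|\by_k| \to \infty$, we conclude $\bu \in T_c^\infty$. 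Thus $T_{c,+}^\infty \subseteq T_c^\infty$, and therefore $T_c^\infty = T_{c,+}^\infty$. For the dimension bound: since $c$ is a regular value of $\bt_M$, the level $M_c$ is an $(n-1)$-dimensional $C^{2+m}$ submanifold of $\Rn\times\R$ on which $\pi_M$ restricts to a diffeomorphism onto $T_c$; hence $T_c$ is a globally definable subset of $\Rn$ of dimension $n-1$, and by the general fact recalled in Section \ref{section:t-reg} its tangent link at infinity $T_c^\infty$ has dimension at most $n-2$, which therefore bounds $\dim T_{c,+}^\infty$ as well.

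The substantial part of the argument (producing an integrable, sphere-tangent field whose flow interpolates between nearby levels with a length bounded by a constant times $|t-c|^{1-\eta}$) has already been carried out in the paragraphs preceding the statement, where it rests on the Bochnak-\L ojasiewicz-type inequality of Proposition \ref{prop:spherical-exponent} (that is, on horizontal sphericalness) together with a Gronwall argument. Inside the proof of the lemma the only point needing some care is that the backward trajectory issued from $\bp_k$ actually reaches $M_c$ without leaving the region $T_{c,\ve}\setminus\clos(\bB_R)$ in which Inequality (\ref{eq:exponent}) is available; this is exactly ensured by the sphere-invariance of $\xi$, which keeps the trajectory on the fixed sphere $\bS_{|\bp_k|}^n$, and by the fact that the flow moves transversally across the levels of $\bt_M$ with the $t$-coordinate staying between $c$ and $t_k$, i.e. it is already part of the diffeomorphism statement quoted above.
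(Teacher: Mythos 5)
Your proof is correct and follows exactly the route the paper intends: the paper's own argument is precisely the sphere-tangent flow of $\xi$ together with the uniform angle estimate coming from Inequality (\ref{eq:gronwall}), which it states just before the lemma and then invokes with ``this proves the following''; you have merely made explicit the trivial inclusion $T_c^\infty \subseteq T_{c,+}^\infty$, the pull-back of a sequence to $M_c$ along the flow, and the dimension count $\dim T_c^\infty \leq \dim T_c - 1 = n-2$. No discrepancy with the paper's proof.
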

%
%
%
%
%
%
%
%
%
%
%
%
%
%
%
%
%
%
%
%
%
%
%
%
%
%
%
%
%
%
%
\section{Main result}\label{section:main}
Our main result Theorem \ref{thm:main} presented in this section is a consequence of   results 
of equisingularity theory and of our context. 
\begin{theorem}\label{thm:main}
Let $F:\R^n\times\R\mapsto\R$ be a $C^{2+m}$ globally definable function over a polynomially bounded o-minimal structure, 
for a non negative integer number $m$. Assuming that $0$ is regular value of $F$, let $M$ be the level $\{F = 0\}$. Let $\bt_M$ be the 
projection of $M$ onto $\R$.

\smallskip
Let $c$ be a regular value taken by $\bt_M$ at which it is horizontally spherical at infinity. Then the total absolute curvature function 
$t \mapsto |K|(t)$ is continuous at $c$. Consequently the total curvature function $t \mapsto K(t)$ is continuous at $c$.
\end{theorem}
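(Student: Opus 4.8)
The plan is to leverage Theorem~\ref{thm:continuity-curvatures}, which already tells us that the one-sided limits $|K|_c^\pm$ and $K_c^\pm$ exist and, moreover, via the Hausdorff-limit analysis of the image sets $U_t$ of the Gauss map, that
$$
|K|(c) \le \min(|K|_c^-,|K|_c^+).
$$
So the only thing to rule out is a \emph{strict} drop, i.e. that some volume-bearing piece of $\scrV_{c,k}^\pm$ fails to be covered by the sheets of $\bN_c$ over the components $V_i$ of $U_c$. Concretely, following the bookkeeping in the proof of Theorem~\ref{thm:continuity-curvatures}, discontinuity can only come from two sources: (a) the multiplicities $l_i^\pm$ exceeding $k_i$ on some component $V_i$ of positive $(n-1)$-volume, or (b) the appearance of ``new'' volume in $\scrV_c^\pm$ supported on $\bS^{n-1}\setminus U_c$ but not in $\clos(U_c)$. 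I would phrase the target as: \emph{under horizontal sphericalness at $c$, the boundary set $\partial U_c$ together with the ``escaping'' locus has $(n-1)$-measure zero and carries no jump of degree}, which forces $\scrV_c^\pm = \clos(U_c)$ with matching multiplicities, hence $|K|_c^\pm = |K|(c)$.

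The heart of the matter is to show there is no curvature escaping to infinity in a neighbourhood of the level $c$; this is precisely the content advertised as Lemma~\ref{lem:dim-n-2} in the introduction. The mechanism is the vector field $\xi$ constructed in Section~\ref{section:more}: by Proposition~\ref{prop:spherical-exponent} horizontal sphericalness gives the \L ojasiewicz-type estimate~(\ref{eq:bochnak-Lojasiewicz-exponent}) with exponent $e_c<1$, hence~(\ref{eq:exponent}), and Gronwall yields the trajectory-length bound~(\ref{eq:gronwall}). The key geometric consequence I would extract is twofold. First, as in Lemma~\ref{lem:small-tangent-cone}, the part at infinity $T_{c,+}^\infty = T_c^\infty$ has dimension $\le n-2$. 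Second, and this is the crucial new input, by Lemma~\ref{lem:normal-infty} every limiting normal direction $\bv\in V_{c,\bu}^\infty$ is orthogonal to $\bu$; since the $\bu$'s fill out a set $T_c^\infty$ of dimension $\le n-2$ and each fibre of normals is cut further, the total set $V_c^\infty$ of normal directions reachable at infinity near level $c$ has dimension $\le n-2$, hence $\vol_{n-1}(V_c^\infty)=0$. Thus the images $\bN_t(T_t)$ for $t$ near $c$ cannot develop any positive-volume feature ``from infinity'': the symmetric difference $\clos(U_t)\,\triangle\,\clos(U_c)$ shrinks to a set contained in $V_c^\infty\cup\partial U_c$, of measure zero.

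From here I would close the argument as follows. Using the diffeomorphism $(T_c\times\{c\}\setminus\bB_R)\times[-\ve,\ve]\to T_{c,\ve}\setminus\bB_R$ furnished by the flow of $\xi$, together with the compact-part local triviality from Theorem~\ref{thm:spherical-trivial}, I get a global $C^{1+m}$ trivialization of the family $(T_t)$ for $t\in[c-\ve,c+\ve]$. Transporting the Gauss map through this trivialization, the maps $\bN_t$ converge to $\bN_c$ in $C^1$ on compact subsets of $T_c\setminus\crit(\bN_c)$, while the non-compact tails contribute no $(n-1)$-volume to the image by the previous paragraph. Consequently $\vol_{n-1}(\scrV_{c,k}^\pm) = \vol_{n-1}(\bigcup_{l_i^\pm=k}V_i)$ with $l_i^\pm = k_i$ for every $i$, so both the degree-weighted sum and the sheet-counted sum pass to the limit:
$$
|K|_c^\pm = \sum_{i=1}^{d_c} k_i \cdot \vol_{n-1}(V_i) = |K|(c), \qquad K_c^\pm = \sum_{i=1}^{d_c}\delta_i\cdot\vol_{n-1}(V_i) = K(c).
$$
This gives continuity of $|K|$ at $c$, and continuity of $K$ at $c$ then follows from part~(iii) of Theorem~\ref{thm:continuity-curvatures}.

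**Main obstacle.** The delicate step is the second one: turning the dimension bound on $T_c^\infty$ and the orthogonality Lemma~\ref{lem:normal-infty} into a genuine $\vol_{n-1}=0$ statement for the \emph{image} $V_c^\infty$ of limiting normals, uniformly for $t$ in a neighbourhood of $c$, and then upgrading ``the limit set has measure zero'' to ``the volumes $\vol_{n-1}(U_t)$ actually converge''. One must be careful that Hausdorff convergence of the closures $\clos(U_t)$ does not by itself control volumes; the argument needs the flow $\xi$ and the exponent bound $e_c<1$ to guarantee that mass cannot concentrate on the boundary $\partial U_c$ either. This is where essentially all the work, and the real use of the hypothesis, is concentrated.
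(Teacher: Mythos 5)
You correctly identify the architecture: everything reduces to showing that $V_c^\infty$ has $(n-1)$-volume zero (Lemma \ref{lem:dim-n-2}), after which one concludes that no curvature accumulates at infinity near the level $c$ and continuity of $|K|$ follows, with continuity of $K$ coming from part (iii) of Theorem \ref{thm:continuity-curvatures}. However, your justification of the key dimension bound has a genuine gap. You argue that since $\dim T_{c,+}^\infty\le n-2$ (Lemma \ref{lem:small-tangent-cone}) and each fibre $V_{c,\bu}^\infty$ is ``cut further'' by the orthogonality $\bv\perp\bu$ (Lemma \ref{lem:normal-infty}), the union $V_c^\infty=\bigcup_{\bu}V_{c,\bu}^\infty$ has dimension at most $n-2$. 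This does not follow: orthogonality to $\bu$ only confines each fibre to the $(n-2)$-sphere $\bu^\perp\cap\bS^{n-1}$, so the union over an $(n-2)$-dimensional set of directions $\bu$ can perfectly well be $(n-1)$-dimensional. Already for $n=3$, if $T_{c,+}^\infty$ were a curve in $\bS^2_\infty$ and each fibre of limit normals a full great circle orthogonal to $\bu$, the union would sweep out a surface of positive area in $\bS^2$; your count gives no control on $\vol_{n-1}(V_c^\infty)$.

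What is actually needed, and what the paper's proof of Lemma \ref{lem:dim-n-2} supplies, is a fibre-dimension drop matching the dimension of the base at infinity: over a stratum $Y$ at infinity of dimension $d$ the limit normals must be orthogonal to the whole space $T_\bp Y\oplus\R\bu$, giving $\dim V_{c,\bu}^{\infty,e}\le n-2-d$, whence $\dim V_c^{\infty,e}\le n-2$. Obtaining this requires machinery absent from your sketch: the Curve Selection Lemma to place any limiting normal inside a definable horn $\cH_e=\{|\bt_M-c|\le|\bp|^{-e}\}$, a Thom $(a_f)$-stratification of the continuous extension $\bt_e$ of $\bt_M$ to $\overline{\cH_e}$ (via Bekka/Loi), the combination of Thom's condition with horizontal sphericalness to treat the case $\nu^t\neq 0$, Whitney's condition $(a)$ together with the argument of Lemma \ref{lem:transverse-horizontal-sphere} for the case $\nu^t=0$, and finally a Hausdorff-limit argument in $e$ to pass from $V_c^{\infty,e}$ to $V_c^\infty$. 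Separately, your final step (upgrading the measure-zero statement at infinity to convergence of the volumes $\vol_{n-1}(U_t)$, which you flag as the ``main obstacle'') is handled in the paper by invoking a result of the second author (no accumulation of curvature at infinity implies continuity of $|K|$); your sketch via the flow of $\xi$ and local triviality is plausible but left unproved, so as written the proposal establishes neither of the two substantive ingredients.
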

It is a straightforward consequence of the following 
\begin{lemma}\label{lem:dim-n-2}
Under the hypotheses of Theorem \ref{thm:main}, we find 
$$
\dim V_c^\infty \leq n-2.
$$
\end{lemma}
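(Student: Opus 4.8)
The plan is to show that $V_c^\infty$, the set of all limits of Gauss-map directions $\bN(\bx_k,\tau_k)$ as $\bx_k \to \infty$ and $\tau_k \to c$, is a proper definable subset of $\bS^{n-1}$ of the claimed dimension, and the cleanest route is to exhibit a definable fibration of $V_c^\infty$ over $T_c^\infty$ whose fibres are themselves of positive codimension. First I would recall from Lemma \ref{lem:small-tangent-cone} that $T_c^\infty = T_{c,+}^\infty$ is definable of dimension at most $n-2$, and that by Lemma \ref{lem:normal-infty}, for each $\bu \in T_{c,+}^\infty$ the fibre $V_{c,\bu}^\infty$ lies in the great subsphere $\bu^\perp \cap \bS^{n-1}$, which has dimension $n-2$. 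Since $V_c^\infty = \bigcup_{\bu \in T_{c,+}^\infty} V_{c,\bu}^\infty$ is the image of a definable set fibred over $T_{c,+}^\infty$, a naive dimension count already gives $\dim V_c^\infty \leq (n-2) + (n-2)$, which is far too weak; the real work is to improve the fibre bound or the total bound to land at $n-2$.

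The key idea I would pursue is that the accumulation really happens along the part at infinity $M^\infty \cap \tau^{-1}(c)$, which is a definable set of dimension at most $n-2$ (being a definable subset of $\bS_\infty^{n-1} \times \{c\}$ of codimension at least one inside $M^\infty$, itself of dimension $n-1$, together with the argument that over a regular value the relevant slice drops dimension — indeed $T_c^\infty$ has dimension $\le n-2$ by Lemma \ref{lem:small-tangent-cone}). I would argue that $\overline{\bN}$ extends continuously (or at least with definable graph) to $\overline{\Gm(\bN)}$, and that $V_c^\infty = \overline{\bN}\bigl(\pi^{-1}(T_{c,+}^\infty \times \{c\})\bigr)$ is the image under a definable map of a definable set sitting over $T_{c,+}^\infty \times \{c\}$. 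The crucial refinement is that this source set $\pi^{-1}(T_{c,+}^\infty \times \{c\}) \cap \overline{\Gm(\bN)}$ has dimension at most $n-2$: each of its fibres over a point $\bu \in T_{c,+}^\infty$ must be bounded, because the orthogonality constraint $\bv \perp \bu$ from horizontal sphericalness, combined with the extra rigidity coming from Inequality (\ref{eq:exponent}) and the Gronwall-type estimate (\ref{eq:gronwall}) controlling how the gradient direction $\nu_{\bt_M}$ behaves near infinity, forces $V_{c,\bu}^\infty$ to be a single point (or at worst a zero-dimensional set) for all but a lower-dimensional set of $\bu$. Concretely, along a definable arc $\bp(s) = (\bx(s), t(s)) \to (\bu, c)$, the exponent relations in the proof of Proposition \ref{prop:spherical-exponent} pin down the leading term of $\nu_M^\bx/|\nu_M^\bx|$ in terms of the arc, so that $\bN$ has a well-defined limit along each arc, and the curve selection / definable choice lemma then bounds the number of distinct limits.

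The main obstacle I expect is precisely establishing that $\dim \bigl(\pi^{-1}(T_{c,+}^\infty \times \{c\}) \cap \overline{\Gm(\bN)}\bigr) \le n-2$ rather than $\le 2n-4$; i.e.\ ruling out that as one moves $\bu$ within $T_{c,+}^\infty$ the whole $(n-2)$-dimensional perpendicular subsphere of possible $\bv$'s is swept out. The resolution should come from the fact that the family of levels is, near $c$, obtained by integrating a single vector field $\xi$ (the construction before Lemma \ref{lem:small-tangent-cone}) which is a $C^{1+m}$ diffeomorphism; this rigidity means the Gauss maps $\bN_t$ for $t$ near $c$ are uniformly close in the relevant sense, so $V_c^\infty$ is essentially governed by a single slice, and the dimension of a single $\bN_c(T_c \setminus \crit(\bN_c))$ near infinity is controlled by $\dim T_c^\infty \le n-2$ via the derivative of $\bN$ having rank at most $\dim T_c^\infty$ at limiting points. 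I would therefore spend the bulk of the argument making rigorous the statement that $V_c^\infty$ is the continuous definable image of a set of dimension $\le n-2$, using the trivialization to transport everything to the fixed level $T_c$ and then applying the standard fact that a definable map does not raise dimension, so that $\dim V_c^\infty \le \dim T_c^\infty \le n-2$.
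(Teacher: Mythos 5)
You correctly identify that the naive count (base $T_{c,+}^\infty$ of dimension $\le n-2$, fibres $V_{c,\bu}^\infty$ inside $\bu^\perp\cap\bS^{n-1}$ of dimension $n-2$) is too weak, but the refinement you propose is not the right one and its key step is false. You claim that the orthogonality constraint plus the estimates (\ref{eq:exponent})--(\ref{eq:gronwall}) force $V_{c,\bu}^\infty$ to be zero-dimensional for all but a lower-dimensional set of $\bu$, and, at the end, that one can get $\dim V_c^\infty \le \dim T_c^\infty$. Both claims fail already for $F(\bx,t)=x_1^2+x_2^2-1-t$ on $\R^3\times\R$: every level is a round cylinder about the $x_3$-axis, $c=0$ is regular and horizontally spherical at infinity, $T_{c,+}^\infty$ consists of the two points $\pm e_3$ (dimension $0$), yet $V_{c,\pm e_3}^\infty$ is the whole equatorial circle, so $\dim V_c^\infty = 1 = n-2 > \dim T_{c,+}^\infty$. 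Thus the fibres are not generically finite, the derivative of $\bN$ is not rank-bounded by $\dim T_c^\infty$ at limiting points, and transporting everything to the slice $T_c$ by the trivializing flow cannot yield $\dim V_c^\infty\le\dim T_c^\infty$. The correct statement is a trade-off, not a uniform fibre bound: over a $d$-dimensional piece of the boundary at infinity the fibre of limit normals has dimension at most $n-2-d$, so base plus fibre is always $\le n-2$.

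The mechanism producing this trade-off, which is absent from your proposal, is the following. Since a limit $\bv\in V_c^\infty$ is attained along a definable arc with $|\bt_M-c|\lesssim|\bp|^{-e}$ for some positive rate, one first replaces $V_c^\infty$ by the definable sets $V_c^{\infty,e}$ of limits attained inside the horn $\cH_e$, with $e\in\F_\cM$; on $\overline{\cH_e}$ the function $\bt_M$ extends continuously with value $c$ on the part at infinity, and one stratifies this pair with Thom's $(a_{\bt_M})$ condition. Over a stratum $Y$ at infinity of dimension $d\le n-2$ (using Lemma \ref{lem:small-tangent-cone}), Thom's condition gives that the limit $\eta$ of $\nubtm$ is orthogonal to $T_\bp Y$, horizontal sphericalness gives $\eta\perp\bu$, and a case analysis on whether the $t$-component of $\lim\nu_M$ vanishes (the vanishing case handled as in Lemma \ref{lem:transverse-horizontal-sphere} together with Whitney $(a)$) shows that $\bv$ is orthogonal to $T_\bp Y\oplus\R\bu$; hence $\dim V_{c,\bu}^{\infty,e}\le n-2-d$ and $\dim V_c^{\infty,e}\le n-2$. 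Finally one must recover $V_c^\infty$ from the $V_c^{\infty,e}$ as $e\to 0$ (an increasing family whose Hausdorff limit is $V_c^\infty$), a passage your argument also does not address, since $V_c^\infty$ aggregates approaches to $c$ at arbitrary rates while definability arguments only apply rate by rate.
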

Let us show the main result.
\begin{proof}[Proof of the main result]
Let $|K| : t \mapsto |K|(t)$ be the total absolute curvature function of the family of hypersurfaces $(T_t)_t$.
By Lemma \ref{lem:dim-n-2} we find that  $V_c^\infty$  has $(n-1)$-dimensional volume zero.
Following \cite[Proposition 6.8]{Gra1}, we deduce there is no accumulation of curvature at infinity at $c$. 
In other words the function $|K|$ is continuous at $c$, and so is $K$ by point (iii) of Theorem 
\ref{thm:continuity-curvatures}. 
\end{proof}
Before going into the proof of Lemma \ref{lem:dim-n-2}, we observe that it states 
that there is no accumulation of curvature at infinity nearby the level $c$, or equivalently
there are no half-branch at infinity of the generic polar curve along which the function 
$\bt_M$ tends to $c$ (see \cite{Tib1,Gra2} for local triviality results with a similar flavor).
\begin{proof}[Proof of Lemma \ref{lem:dim-n-2}]
Let $\bv$ be a limit of normal direction lying in $V_c^\infty$. 
By the Curve Selection Lemma we can find a globally definable continuous path, going to infinity, 
along which this limit is reached: there exists such a path $\gm$ such that 
$$
\bN \circ \gm \to \bv \; \mbox{ and } \; \bt_M\circ\gm \to c.
$$
In particular there exists a positive exponent $\alpha$, in the field of exponents $\F_\cM$ of the structure $\cM$
such that 
$$
\frac{\bt_M\circ\gm - c}{|\gm|^{\alpha}} \to a \in \R^*.
$$
In other words there exists a positive exponent $e$ such that the germ at infinity of $\gm$ 
lies in 
$$
\cH_e := \{\bp \in T_{c,\ve}\setminus \bB_R \, : \, |\bt_M(\bp) - c| \leq |\bp|^{-e}\}.
$$
If the exponent $e$ belongs to $F_\cM$, then $\cH_e$ is globally definable and so is its closure $\overline{\cH_e}$ 
in $\Rnbar\times \R$. 
Let us define
$$
V_c^{\infty,e} := \left\{\bv\in \bS^{n-1} \, : \, \exists \; (\bp_k)_k \, \in \, 
\cH_e \, \hbox{ such that } \, \bN(\bp_k) \to \bv \right\} 
$$
which is a closed definable subset of $\bS^{n-1}$ contained in $V_c^\infty$ whenever $e$ lies in $\F_\cM$.

\smallskip
Let $\cH_e^\infty$ be the intersection $\overline{\cH_e} \cap \bS_\infty^{n-1} \times \{c\}$.
The function $\bt_M$ extends continuously and definably to $\overline{\cH_e}$ taking the value 
$c$ along $\cH_e^\infty$. Let $\bt_e$ be the restriction of this extension to $\overline{\cH_e}
\setminus \bB_R$.

According to \cite{Bekka,Loi}, we can stratify the pair $(\bt_e,\overline{\cH_e} \setminus \bB_R)$ 
with Thom's condition. Furthermore we can require that $X:=\cH_e\setminus \bB_R$ and $Y:=\cH_e^\infty$ 
are union of strata.

\smallskip
Suppose first that $X$ and $Y$ are strata. The dimension of $Y$ is $d \leq n-2$ since $\cH_e^\infty$ is contained 
in $T_{c,+}^\infty$, thus of dimension lower than or equal to $n-2$ by Lemma \ref{lem:small-tangent-cone}.
Let $\bp =(\bu,c)$ be a point of $Y$ and let $T := T_\bp Y$ which is contained in $\Rn\times 0$. Note that $T$ and $\bu$
are orthogonal. 

Let $\bv$ be a limit of the normal $\bN$
at infinity at $\bu$ taken into $\cH_e$ along a path $\gm$. We will show that $\bv$ and $T \oplus \R \bu$
are orthogonal. 
We recall that $\nu_M = \nu_M^\bx + \nu_M^t \dt$. Let $\nu$ be the limit of $\nu_M$ along $\gm$ as $\gm$ 
goes to infinity  and let $\eta$ be the limit of $\nubtm$. 
Writing $\nu$ as $(\nu^\bx,\nu^t)$ in $\Rn\times\R$, we have 
$$
\bv = \frac{\nu^\bx}{|\nu^\bx|} \; \mbox{ and }
\; \eta =-\nu^t \bv + \vert \nu^\bx \vert^2 \dt. 
$$
Thom's condition implies that $\eta$ and $T$ are orthogonal. Moreover, by horizontal spherical-ness at infinity, 
$\eta$ and $\bu$ are also orthogonal, therefore $\eta$ and $T \oplus \R \bu$ are orthogonal too. Hence, if 
$\nu^t \not=0$, then $\bv$ is orthogonal to $T \oplus \R \bu$ since $T \oplus \R \bu$ is contained in $\R^n \times 0$. 
If $\nu^t =0$ then $\bv = \nu$. Using the arguments of the proof of Lemma \ref{lem:transverse-horizontal-sphere}, 
we see that $\bu$ and $\nu$ are orthogonal. By Whitney's condition $(a)$, we know that $T_\bp Y$ is a subspace of
$\lim_\infty T_\gm M$ and so $T_\bp Y$ and $\nu$ are orthogonal. Hence we conclude that $\bv=\nu$ is orthogonal 
to $T_\bp Y \oplus \R \bu$.

\smallskip
Let $V_{c,\bu}^{\infty,e} :=  V_c^{\infty,e} \cap V_{c,\bu}^\infty$. 
We have proved that $\dim V_{c,\bu}^{\infty,e} \leq (n-1)-(d+1) = n-d-2$, and thus $\dim V_c^{\infty,e}\leq n-2$. 

\smallskip
In the general case the only thing to check is that whenever $X$ contains a (globally definable) stratum $S$ 
of dimension $s$ at most $n-1$, then its contribution to $V_c^\infty$ is at most of dimension $n-2$.
But this is so since the graph of $\bN|S$ is of dimension $s$, so that its limits at infinity
$$
\left\{\bv\in \bS^{n-1} \, : \, \exists S \ni (\bx_k,\tau_k)_k \, \hbox{ such that } \, 
\tau_k \to c\, , \,  \bN(\bx_k,\tau_k) \to \bv \right\} \subset V_c^\infty
$$
have dimension at most $s-1 \leq n-2$.

\smallskip
We conclude that $V_c^{\infty,e}$ has dimension lower than or equal to $n-2$ for any exponent $e$ of $\F_\cM$.

\medskip
Since any limit $\bv$ of $V_c^\infty$ belongs to some $V_c^{\infty,e}$ for some $e$ in $\F_\cM$, and since the
family $(V_c^{\infty,e})_{e\in(\F_\cM){>0}}$ is increasing as $e$ goes to $0$, we get that $V_c^\infty$ is the Hausdorff 
limit at $e=0$ of $V_c^{\infty,e}$, thus has dimension lower than or equal to $n-2$.
\end{proof}
We conclude with an interesting observation. For this purpose we need a few more preparations. 
Let $c$ be regular value taken by $\bt_M$. Let $\ve$ be a positive number such that $[c-\ve,c+\ve]$ 
consists only of regular values. Let $Z$ be a connected component of $\bt_M^{-1}(]c-\ve,c+\ve[)$. 
Let us consider now $\bt_Z$ the restriction of $\bt_M$ to $Z$. 
Let $Z_t \times \{t\} := \bt_Z^{-1}(t) = M_t \cap Z$.
Let $K_Z(t) := \int_{Z_t} \kp$ and $|K|_Z(t) := \int_{Z_t} |\kp|$ for $t$ in $]c-\ve,c+\ve[$. 
Then we actually have showed the following:
\begin{theorem}\label{thm:main-connected}
Under the above hypotheses, assume furthermore that $\bt_Z$ is horizontally spherical at infinity at $c$. Then
the functions $K_Z$ and $|K|_Z$ are continuous at $c$.
\end{theorem}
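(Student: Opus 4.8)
The plan is to observe that Theorem \ref{thm:main-connected} is not a new statement but a packaging of the arguments already carried out for Theorem \ref{thm:main}, applied \emph{componentwise}. First I would note that since $[c-\ve,c+\ve]$ consists only of regular values of $\bt_M$, each connected component $Z$ of $\bt_M^{-1}(]c-\ve,c+\ve[)$ is itself a globally definable $C^{2+m}$ hypersurface (being open and closed in $\bt_M^{-1}(]c-\ve,c+\ve[)$, hence definable, and its closure in $\Rnbar\times\R$ is definable); the restriction $\bt_Z$ is a globally definable $C^{2+m}$ submersion onto $]c-\ve,c+\ve[$, and the levels $Z_t=M_t\cap Z$ are (unions of connected components of) the $T_t$. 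Thus all the local analytic notions attached to $\bt_M$ near $c$ -- the Gauss mapping $\bN$, the horizontal sphericalness condition, the exponent inequality of Proposition \ref{prop:spherical-exponent}, the sets $T_{c,+}^\infty$ and $V_c^\infty$ -- make sense verbatim with $M$ replaced by $Z$, and I would denote the corresponding objects $T_{c,+}^\infty(Z)$, $V_c^\infty(Z)$.

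Next I would carry the chain of lemmas through for $Z$. The hypothesis is precisely that $\bt_Z$ is horizontally spherical at infinity at $c$, so by (the proof of) Proposition \ref{prop:spherical-exponent} the exponent inequality (\ref{eq:bochnak-Lojasiewicz-exponent}) holds on $\bt_Z^{-1}([c-\ve',c+\ve'])\setminus\bB_R$ for some $\ve'\le\ve$, $R$; feeding this into the Gronwall estimate exactly as in Section \ref{section:more} gives $T_c^\infty(Z)=T_{c,+}^\infty(Z)$ of dimension $\le n-2$, i.e. the analogue of Lemma \ref{lem:small-tangent-cone}. Then I would rerun the proof of Lemma \ref{lem:dim-n-2} with $M$ replaced by $Z$ throughout: the Curve Selection Lemma produces a definable arc in $Z$ along which a given limiting normal $\bv$ is attained with $\bt_Z\to c$; one localizes to the definable set $\cH_e\cap Z$, stratifies the pair $(\bt_e,\overline{\cH_e\cap Z}\setminus\bB_R)$ with Thom's condition; on the top stratum the orthogonality argument (using Thom's $(a_{\bt_e})$-condition, horizontal sphericalness of $\bt_Z$, and Whitney $(a)$) forces $\bv\perp T_\bp Y\oplus\R\bu$, giving $\dim V_c^{\infty,e}(Z)\le n-2$; lower-dimensional strata contribute dimension $\le n-2$ automatically; and letting $e\to 0$ yields $\dim V_c^\infty(Z)\le n-2$. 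Finally, since $V_c^\infty(Z)$ has $(n-1)$-dimensional volume zero, \cite[Proposition 6.8]{Gra1} applied to the family $(Z_t)_t$ -- whose total (absolute) curvatures are exactly $K_Z$ and $|K|_Z$ -- shows there is no accumulation of curvature at infinity at $c$, hence $|K|_Z$ is continuous at $c$; and $K_Z$ is continuous at $c$ because point (iii) of Theorem \ref{thm:continuity-curvatures} (whose proof is purely combinatorial-local and applies equally to the family $(Z_t)_t$) deduces continuity of $K_Z$ from that of $|K|_Z$.

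The only real point requiring care -- and the step I would flag as the main obstacle -- is the bookkeeping needed to confirm that every ingredient genuinely restricts to $Z$: that $Z$ and $\overline{Z}$ are definable; that $T_{c,+}^\infty(Z)\subset T_{c,+}^\infty$ so Lemma \ref{lem:small-tangent-cone}'s conclusion transfers (here the hypothesis is that $\bt_Z$, not $\bt_M$, is horizontally spherical, which is a priori weaker than sphericalness of $\bt_M$, so one cannot simply quote Lemma \ref{lem:small-tangent-cone} but must rerun its Gronwall proof with the field $\xi$ restricted to $Z$); and that Thom-stratifications of $\overline{\cH_e\cap Z}\setminus\bB_R$ exist, which is immediate from \cite{Bekka,Loi} since $\cH_e\cap Z$ is definable. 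None of these is hard, but they are exactly the places where "the same proof works" could fail silently, so I would spell them out. Given the excerpt's own remark that "we actually have showed the following", the cleanest write-up is simply: \emph{replace $M$ by $Z$, $\bt_M$ by $\bt_Z$, and $V_c^\infty$ by $V_c^\infty(Z)$ in the proofs of Lemmas \ref{lem:small-tangent-cone} and \ref{lem:dim-n-2} and in the proof of the main result; all hypotheses used there are satisfied, and the conclusion is the continuity of $K_Z$ and $|K|_Z$ at $c$.}
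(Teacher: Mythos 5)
Your proposal is correct and coincides with the paper's own (implicit) argument: the paper offers no separate proof of Theorem \ref{thm:main-connected}, stating only that it ``actually showed'' it, the point being precisely that the proofs of Proposition \ref{prop:spherical-exponent}, Lemma \ref{lem:small-tangent-cone}, Lemma \ref{lem:dim-n-2} and Theorem \ref{thm:continuity-curvatures} use only the hypersurface, its projection and horizontal sphericalness, and hence apply verbatim with $M$ replaced by the definable component $Z$ and $\bt_M$ by $\bt_Z$. Your componentwise bookkeeping (definability of $Z$, rerunning the Gronwall and Thom-stratification steps for $\bt_Z$) is exactly the intended verification, so nothing further is needed.
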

To rephrase informally Theorem \ref{thm:main-connected}, the continuity of $t\mapsto |K|(t)$ nearby 
the value $c$ at which the function $\bt_M$ is horizontally spherical at infinity, is equivalent to
the continuity nearby $c$
of each function $t\mapsto |K|_Z(t)$ for each connected component $Z$ of $\bt_M^{-1}(]c-\ve,c+\ve[)$.
%
%
%
%
%
%
%
%
%
%
%
%
%
%
%
%
%
%
%
%
%
%
%
%
%
%
%
%
%
%
%
%
%
%
%
%
%

%
%
%
%
%
%
%
%
%
%
%
%
%
%
%
%
%
%
%
%
%
%
%
%
\section{The special case of functions}
We treat here briefly the case of functions which is a special case of the context presented here.
The continuity of curvatures is the same property but the regularity conditions are a little 
bit different. 

\medskip
Let $f:\R^n \mapsto \R$ be a $C^{2+m}$, with non-negative $m$, globally definable function. 
We denote the level $f^{-1}(t)$ by $F_t$ and its closure in the spherical compactification 
by $\overline{F_t}$. Its intersection with the sphere at infinity $\bS_\infty^{n-1}$ will be denoted 
$F_t^\infty$. 
Let $\nu_f$ be the unitary gradient field $\frac{\nabla f}{|\nabla f|}$.

\smallskip
The function $f$ satisfies \em Malgrange condition \em at $c$ if there are positive constants 
$R,\ve, A$ such that 
$$
|\bx|> R, \, |f(\bx) - c|< \ve \; \Longrightarrow \; |\bx|\cdot|\nabla f(\bx) | \geq A|f(\bx) - c|.
$$

\smallskip
We would like to introduce what the analogue of horizontal spherical-ness in this context would be.
The function $f$ is \em spherical at the regular value $c$ at infinity \em if 
along any sequence of points $(\bx_k)_k$ of $\R^n$ such that 
$|\bx_k|$ goes to $\infty$ and $f(\bx_k)$ goes to $c$, we have
$$
\left\langle \lim_\infty \nu_f (\bx_k) \; , \; \lim_\infty \frac{\bx_k}{|\bx_k|} \right\rangle = 0,
$$
whenever each limit exists.
\\
This condition is equivalent to the following result already proved in \cite{DaGr1,DaGr2}
which justified the introduction for families of the notion of horizontal spherical-ness at infinity.
\begin{theorem}[\cite{DaGr1,DaGr2}]
Let $c$ be a regular value of $f$ taken by $f$. The function $f$ is spherical at infinity at $c$
if and only if there exists an exponent $e_c$ in $\F_\cM \cap ]-\infty,1[$ and a positive constant 
$E_c$ such that
$$
|\bx| \gg 1, \, |f(\bx - c|\ll 1 \; \Longrightarrow \; |\bx|\cdot|\nabla f(\bx)| \geq E_c |f(\bx) - c|^{e_c}.
$$
\end{theorem}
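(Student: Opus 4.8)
The plan is to transpose to the one-function setting the proof of Proposition~\ref{prop:spherical-exponent}; here it is in fact simpler, and both implications will follow from the definable Curve Selection Lemma together with a \L ojasiewicz inequality on the spherical compactification $\Rnbar$.

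\emph{Sufficiency.} Assuming the inequality holds with $e_c<1$, I would argue by contradiction. Take $(\bx_k)_k$ going to infinity with $f(\bx_k)\to c$, $\bx_k/|\bx_k|\to\bu$ and $\nu_f(\bx_k)\to\nu$, and by Curve Selection realize these limits along a $C^1$ globally definable arc $\bx:\,]0,1[\,\mapsto\R^n$, parameterized by $|\bx(s)|=s^{-1}$, so that $\bx'(s)=-s^{-2}\bu+o(s^{-2})$. Expand $f(\bx(s))=c+As^a+o(s^a)$ with $A\in\R$, $a\in(\F_\cM)_{>0}\cup\{+\infty\}$; if $A=0$ then $f\circ\bx\equiv c$ near $0$, so $\la\nabla f(\bx(s)),\bx'(s)\ra=0$ and, dividing by $|\nabla f(\bx(s))|\neq0$ and letting $s\to0$, $\la\nu,\bu\ra=0$. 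If $A\neq0$, differentiating gives $aAs^{a-1}(1+o(1))=\la\nabla f(\bx(s)),\bx'(s)\ra=|\nabla f(\bx(s))|\big(-s^{-2}\la\nu_f(\bx(s)),\bu\ra+o(s^{-2})\big)$; were $\la\nu,\bu\ra\neq0$ this would force $|\bx(s)|\cdot|\nabla f(\bx(s))|\sim s^{a}$, whereas the hypothesis gives $|\bx(s)|\cdot|\nabla f(\bx(s))|\geq E_c|f(\bx(s))-c|^{e_c}\sim E_c|A|^{e_c}s^{ae_c}$, and $s^{a}\gtrsim s^{ae_c}$ is impossible for small $s$ since $a(1-e_c)>0$. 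Hence $\la\nu,\bu\ra=0$.

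\emph{Necessity.} Assume $f$ is spherical at $c$ at infinity. If $c\notin K_\infty(f)$ the Malgrange condition already gives $|\bx|\cdot|\nabla f|\geq A$ near infinity, so $e_c=0$ works; thus assume $c$ is an asymptotic critical value. As in \cite{DaGr1,DaGr2}, a Bochnak--\L ojasiewicz inequality $|\bx|\cdot|\nabla f|\geq L_c|f-c|$ holds near infinity at $c$, and the task is to improve the exponent $1$. I would choose $\ve_0>0$ so that $[c-\ve_0,c+\ve_0]$ contains no value of $K_0(f)$ and no asymptotic critical value other than $c$, fix $R_0\gg1$, set $V:=\{\bx:|f(\bx)-c|\leq\ve_0,\ |\bx|\geq R_0\}$ with closure $V_0$ in $\Rnbar$ and $W_0:=V_0\cap\bS^{n-1}_\infty$, and introduce: the globally definable function $\mu_0(R):=\min\{|\bx|\cdot|\nabla f(\bx)|:\bx\in V,\ |\bx|=R\}$, which tends to $0$ and equals $A_0R^{-a_0}(1+o(1))$ with $A_0>0$ and $a_0\in(\F_\cM)_{>0}$; and the definable functions $\psi_0:\bx\mapsto|f(\bx)-c|/(|\bx|\cdot|\nabla f(\bx)|)$ and $\rho_0:\bx\mapsto|\bx|^{-1}$ on $V_0\setminus W_0$. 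Sphericalness at infinity is exactly what makes $\psi_0$ extend continuously and definably over the compact $V_0$ by the value $0$ on $W_0$ (as does $\rho_0$), so that $\rho_0=0\Rightarrow\psi_0=0$; a \L ojasiewicz inequality then yields $b,B>0$ with $\psi_0\leq B\rho_0^{\,b}=B|\bx|^{-b}$ on $V_0$, and combining this with $(A_0/2)|\bx|^{-a_0}\leq\mu_0(|\bx|)\leq|\bx|\cdot|\nabla f(\bx)|$ gives, on $V_0\setminus W_0$, $|f(\bx)-c|\leq C(|\bx|\cdot|\nabla f(\bx)|)^{1+b/a_0}$ for some $C>0$; this is the desired inequality with $e_c:=a_0/(a_0+b)\in\F_\cM\cap\,]-\infty,1[\,$ and a suitable $E_c>0$.

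The main obstacle is the continuous extension of $\psi_0$ to $W_0$ with value $0$: this is the one place where the spherical hypothesis is genuinely used. The point is that along any $C^1$ globally definable arc $\bx(s)$ going to infinity inside $V$ with $f(\bx(s))\to c$ (parameterized by $|\bx(s)|=s^{-1}$), when $f\circ\bx\not\equiv c$ one has $\psi_0(\bx(s))$ comparable to the definable quantity $\big|\la\tfrac{\nabla f(\bx(s))}{|\nabla f(\bx(s))|},\tfrac{\bx'(s)}{|\bx'(s)|}\ra\big|$, which tends to $|\la\nu,\bu\ra|=0$; one then invokes the standard fact that a definable function on $V_0\setminus W_0$ whose limit along every definable arc ending on $W_0$ is $0$ extends continuously by $0$. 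All the remaining \L ojasiewicz and compactification bookkeeping is identical to that carried out for families in Proposition~\ref{prop:spherical-exponent}.
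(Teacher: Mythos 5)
Your sufficiency direction is fine: the dichotomy ``$f\circ\bx\equiv c$ eventually or $f\circ\bx-c\sim As^a$ with $a\in(\F_\cM)_{>0}$'' is legitimate in a polynomially bounded structure, and the exponent comparison $s^a\gtrsim s^{ae_c}$ with $a(1-e_c)>0$ gives the contradiction; your overall plan is indeed the one the paper uses for the family analogue (Proposition \ref{prop:spherical-exponent}) — note the paper itself does not prove the present statement but quotes it from \cite{DaGr1,DaGr2}. The genuine gap is in the necessity direction, at the step where $\psi_0=|f-c|/(|\bx|\cdot|\nabla f|)$ is asserted to extend continuously by $0$ over \emph{all} of $W_0$. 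The set $W_0$ consists of all points at infinity of the band $\{|f-c|\le\ve_0,\ |\bx|\ge R_0\}$, hence contains points lying over every value $t'\in[c-\ve_0,c+\ve_0]$, not only over $c$. At a point over $t'\neq c$ the numerator tends to $|t'-c|>0$, so $\psi_0\to0$ there would force $|\bx|\cdot|\nabla f|\to\infty$; this follows neither from sphericalness at $c$ (which constrains only sequences with $f\to c$) nor from $t'$ not being an asymptotic critical value, which only yields a positive lower bound: for the semialgebraic function $f(x,y)=xy/(1+x^2)$ one has $|\bx|\cdot|\nabla f|\to 1+t'^2$ along $y=t'x$ while $f\to t'$. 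Your own justification — checking definable arcs along which $f\to c$ — covers exactly the points of $W_0$ over $c$ and no others, whereas the \L ojasiewicz comparison $\psi_0\le B\rho_0^{\,b}$ requires $\psi_0$ to vanish on the whole zero set of $\rho_0$, i.e.\ on all of $W_0$. (The proof of Proposition \ref{prop:spherical-exponent} in the paper contains the same unargued assertion, so you have reproduced it faithfully, but as written the step is not justified by the hypotheses actually used.)

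A clean repair, close in spirit to \cite{DaGr1,DaGr2}, avoids the compactified two-variable \L ojasiewicz step altogether. For small $\epsilon>0$ set $\theta(\epsilon):=\inf\{|\bx|\cdot|\nabla f(\bx)|\ :\ |f(\bx)-c|=\epsilon,\ |\bx|\ge R_0\}$, a definable function which is positive (after shrinking $\ve_0$ there are no critical values and no asymptotic critical values other than $c$ in the band), hence either bounded below or $\theta(\epsilon)\sim E\epsilon^{e}$ with $e\in\F_\cM$. If $e\ge1$, pick by definable choice a curve $\epsilon\mapsto\bx(\epsilon)$ with $|f(\bx(\epsilon))-c|=\epsilon$, $|\bx(\epsilon)|\ge R_0$ and $|\bx(\epsilon)|\cdot|\nabla f(\bx(\epsilon))|\le 2\theta(\epsilon)\to0$; it cannot stay bounded (a bounded accumulation point would be a critical point on the regular level $f^{-1}(c)$), so it escapes to infinity with $f\to c$, and after reparameterizing by $s=1/|\bx|$ your own chain-rule computation plus sphericalness gives $|f-c|\sim s^{a}$, $|\bx|\cdot|\nabla f|\sim s^{a'}$ with $a'<a$, whence $\theta(\epsilon)\asymp\epsilon^{a'/a}$ and $a'/a<1$, a contradiction. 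Thus $e<1$, and the pointwise bound $|\bx|\cdot|\nabla f(\bx)|\ge\theta(|f(\bx)-c|)\ge E_c|f(\bx)-c|^{e_c}$ holds with a suitable $e_c\in\F_\cM\cap\,]-\infty,1[$, which is the desired inequality; the easy case where $c$ is not an asymptotic critical value is handled exactly as you did, with $e_c=0$.
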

It is well known that $t$-regularity is equivalent to Malgrange \cite{DRT} (their proof goes through 
the globally definable context) and that Malgrange is equivalent  to requiring having $e_c \leq 0$, thus
spherical-ness at infinity. 

\medskip
Let $K(t)$ be the total Gauss-Kronecker curvature of $F_t$ and $|K|(t)$ be the total absolute Gauss-Kronecker
curvature of $F_t$. In the context of functions what we have proved is the following:
\begin{theorem}\label{thm:main-function}
Let $f:\R^n \mapsto \R$ be a globally definable $C^{2+m}$ function for some non-negative integer $m$.
Let $c$ be a regular value at which the function is spherical at infinity. 

(1) Then the function $t \mapsto |K|(t)|$ is continuous at $c$, and thus so is $t\mapsto K(t)$.

(2) As for Theorem \ref{thm:main-connected}, for any connected component $Z$ of $f^{-1}]c-\ve,c+\ve[$
for positive $\ve$ small enough, the function $t\mapsto |K|_Z(t)$ is continuous at $c$, and thus so
is $t\mapsto K_Z(t)$.
\end{theorem}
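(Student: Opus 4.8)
The plan is to reduce Theorem~\ref{thm:main-function} to the already-established results for families by viewing a function $f:\R^n\mapsto\R$ as a trivial one-parameter family, and then to check that the two regularity notions match up under this identification. First I would set $F:\R^n\times\R\mapsto\R$, $F(\bx,t):=f(\bx)-t$, which is a $C^{2+m}$ globally definable function for which every value is a regular value of $F$ (since $\dd_t F\equiv -1$, so $\nF$ never vanishes); hence $M:=F^{-1}(0)$ is the graph $\{(\bx,f(\bx))\}$, a $C^{2+m}$ globally definable hypersurface of $\R^{n+1}$, and $\bt_M$ is exactly the projection $(\bx,f(\bx))\mapsto f(\bx)$. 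The fibre $T_c=\pi_M(M_c)$ is then literally $F_c=f^{-1}(c)$, so $K(c)$ and $|K|(c)$ in the sense of Section~\ref{section:curvature} coincide with the total Gauss--Kronecker and total absolute curvature of $F_c$.

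Next I would translate the regularity hypotheses. On the graph $M$ one has $\nF=\dbx F+\dt F\dt=\nabla f-\dt$, so $\nu_M=\frac{1}{\sqrt{1+|\nabla f|^2}}(\nabla f,-1)$, whence $\nu_M^\bx=\frac{\nabla f}{\sqrt{1+|\nabla f|^2}}$ and $\frac{\nu_M^\bx}{|\nu_M^\bx|}=\frac{\nabla f}{|\nabla f|}=\nu_f$, and $|\nbt_M|=|\nu_M^\bx|=\frac{|\nabla f|}{\sqrt{1+|\nabla f|^2}}$. A sequence $(\bp_k)_k$ in $M$ converging to $(\bu,c)\in M^\infty$ is the same as a sequence $\bx_k\in\R^n$ with $|\bx_k|\to\infty$, $\frac{\bx_k}{|\bx_k|}\to\bu$ and $f(\bx_k)\to c$. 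Therefore Definition~\ref{def:spher-infty} (horizontal sphericalness of $\bt_M$ at $c$) reads $\langle\lim_\infty\nu_f(\bx_k),\bu\rangle=0$ along every such sequence, which is exactly the definition of $f$ being spherical at $c$ at infinity given in Section~9. So the hypothesis of Theorem~\ref{thm:main-function} is precisely the hypothesis of Theorem~\ref{thm:main} applied to this $F$. (The equivalence with the exponent inequality involving $e_c<1$ is then nothing but Proposition~\ref{prop:spherical-exponent} specialised to the graph, recovering the cited statement of \cite{DaGr1,DaGr2}; I would remark on this but not reprove it.)

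Part~(1) now follows immediately: apply Theorem~\ref{thm:main} to $F$ to get that $t\mapsto|K|(t)$ is continuous at $c$, and then point~(iii) of Theorem~\ref{thm:continuity-curvatures} gives continuity of $t\mapsto K(t)$ at $c$. For part~(2), I would observe that the connected components of $f^{-1}(]c-\ve,c+\ve[)$ correspond, under the graph identification, to the connected components $Z$ of $\bt_M^{-1}(]c-\ve,c+\ve[)$, that $\bt_Z$ inherits horizontal sphericalness at $c$ from $f$ being spherical at $c$ (the defining condition is purely pointwise along sequences going to infinity, so it localises to any component), and hence Theorem~\ref{thm:main-connected} applied to each $Z$ yields the continuity of $K_Z$ and $|K|_Z$ at $c$.

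The only genuinely non-routine point is the verification that the o-minimal/polynomial-boundedness setup transports correctly: one must note that $F(\bx,t)=f(\bx)-t$ is globally definable in the spherical compactification of $\R^{n+1}$ whenever $f$ is globally definable on that of $\R^n$, and that $0$ being a regular value of $F$ is automatic, so that all hypotheses of Theorem~\ref{thm:main} are met. I expect this to be essentially a formality given the definitions in Section~\ref{section:misc}, so the whole proof is a short reduction; there is no serious obstacle, and I would keep the write-up to a few lines, stressing the dictionary $\nu_M^\bx/|\nu_M^\bx|=\nu_f$ and $T_c=F_c$ as the two substantive identifications.
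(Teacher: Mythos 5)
Your reduction is correct and is essentially the paper's own route: Section 9 treats the function case precisely as a special case of the family setting (taking $M$ to be the graph of $f$, i.e.\ $F(\bx,t)=f(\bx)-t$, so that $\bt_M$ corresponds to $f$ and horizontal sphericalness becomes sphericalness at infinity), and then invokes Theorem \ref{thm:main}, Theorem \ref{thm:main-connected} and Theorem \ref{thm:continuity-curvatures}(iii) exactly as you do. Your write-up merely makes explicit the dictionary $\nu_M^\bx/|\nu_M^\bx|=\nabla f/|\nabla f|$ and $T_c=f^{-1}(c)$ that the paper leaves implicit, which is fine.
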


Let us end with a last result on equisingularity of the family of fibres of a function.
\begin{corollary}
Let $f:\R^n \mapsto \R$ be a globally definable $C^{2+m}$ function for some non-negative integer $m$.
Let $c$ be a regular value at which the function is spherical at infinity. 

If $n$ is odd then the following function is continuous at $c$
$$
t \mapsto \int_{\bG (n-1,n)} \chi \left( f^{-1}(t) \cap H \right) dH .
$$

If $n$ is even then the following function is continuous at $c$
$$
t \mapsto \int_{\bG (n-1,n)} \left[\chi \left( \{f \ge t\} \cap H \right) - \chi \left( \{f \le t\} 
\cap H \right)  \right] dH .
$$
\end{corollary}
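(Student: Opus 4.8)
The plan is to deduce this corollary from Theorem \ref{thm:main-function} by expressing the stated integral-geometric quantity as an affine function of the total absolute curvature $|K|(t)$ (and the total curvature $K(t)$) of the fibre $F_t=f^{-1}(t)$, using the classical Gauss-Bonnet / Chern-Lashof type formulas for definable hypersurfaces together with the Cauchy-Crofton formula for the Euler characteristic of plane sections. First I would recall that for a smooth (or definable $C^2$) closed hypersurface $H_t\subset\R^n$ oriented by $\nu_{f}$, integral geometry over the affine Grassmannian $\bG(n-1,n)$ gives a formula of the shape
\[
\int_{\bG(n-1,n)} \chi\bigl(f^{-1}(t)\cap H\bigr)\, dH = \alpha_n\, K(t) + \beta_n\, \chi\bigl(f^{-1}(t)\bigr)
\]
when $n$ is odd (the relevant Lipschitz-Killing curvature that survives the section is, up to a universal constant, the Gauss-Kronecker curvature, because $F_t$ has dimension $n-1$ and the generic hyperplane section has dimension $n-2$), and similarly in the even case for the signed Euler characteristic of the two half-spaces one gets a combination of $K(t)$ and terms that are either constant or themselves continuous. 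In the non-compact definable setting these formulas must be taken in the form appropriate to globally definable sets; the key point is that the coefficient of the top curvature term is the relevant combination of absolute/signed Gauss-Kronecker integrals, which are exactly the functions $|K|$ and $K$ treated in Section \ref{section:curvature}.

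Next I would handle the remaining, lower-order terms. The Euler characteristic $t\mapsto\chi(f^{-1}(t))$ is, by the existing bifurcation theory for globally definable functions recalled in the introduction (finiteness of $B(f)$, local $C^{1+m}$-triviality away from it, and the fact that sphericalness at $c$ implies local triviality by the function-analogue of Theorem \ref{thm:spherical-trivial}), locally constant near a regular value $c$ at which $f$ is spherical at infinity; hence it is in particular continuous at $c$. The same triviality statement makes $t\mapsto\chi(\{f\ge t\}\cap H)$ and $t\mapsto\chi(\{f\le t\}\cap H)$ behave tamely, and after integration over $\bG(n-1,n)$ the only genuinely delicate contribution is the one proportional to the total curvature. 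Since Theorem \ref{thm:main-function}(1) asserts that $t\mapsto|K|(t)$, and therefore $t\mapsto K(t)$, is continuous at $c$, substituting back into the Cauchy-Crofton expression yields continuity at $c$ of the displayed integral-geometric function in both parity cases.

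The main obstacle will be the first step: pinning down precisely which integral-geometric identity to invoke in the globally definable (non-compact, possibly unbounded fibre) category, and checking that the hyperplane-section integral really does reduce to the Gauss-Kronecker curvature integral with no extra unbounded boundary contributions. Concretely, one must justify interchanging the integral over $\bG(n-1,n)$ with the curvature integral on $F_t$, control the behaviour of $F_t\cap H$ for $H$ in the measure-zero bad set, and make sure the Chern-Lashof/Gauss-Bonnet normalisation is the one whose top term is $\int_{F_t}\kappa_t$ rather than $\int_{F_t}|\kappa_t|$; the parity split in the statement is exactly the signature of this normalisation (odd $n$: unsigned $\chi$ of the section picks out the signed curvature; even $n$: one must instead difference the two half-space Euler characteristics to isolate the signed curvature). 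Once that algebraic identity is in place, the rest is a direct appeal to Theorem \ref{thm:main-function} and to local triviality of $f$ at $c$, so I would present the identity as a lemma (citing the integral-geometry references already in the bibliography, e.g.\ the Langevin, Loeser, and Comte-Merle line of work) and then give a two-line deduction.
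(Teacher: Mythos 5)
Your strategy is the same as the paper's: rewrite the Grassmannian integral via an exchange formula whose top term is the signed total curvature $K(t)$, invoke Theorem \ref{thm:main-function} for the continuity of $K$ at $c$, and dispose of the remaining Euler-characteristic terms using local triviality at $c$. The obstacle you single out as the main difficulty --- pinning down an integral-geometric identity valid for non-compact definable fibres --- is not an obstacle here: the needed statement is exactly Theorem 4.5 of \cite{DutertreGeoDedicata2004}, already in the bibliography, formulated for precisely these non-compact real fibres and with exactly the parity split appearing in the statement; the paper's proof simply applies that theorem, so no new lemma is required.

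Where your write-up has a genuine soft spot is the even case. In the exchange formula the lower-order terms are the global Euler characteristics $\chi(\{f\ge t\})$ and $\chi(\{f\le t\})$, not the sections $\chi(\{f\ge t\}\cap H)$ and $\chi(\{f\le t\}\cap H)$: asserting that the latter ``behave tamely after integration over $\bG(n-1,n)$'' is circular, since their integrated difference is precisely the quantity whose continuity is to be proved. What must be shown is that $t\mapsto\chi(\{f\ge t\})$ and $t\mapsto\chi(\{f\le t\})$ are constant near $c$, and this does not follow from merely quoting local triviality over $[c-\ve,c+\ve]$, which only controls the slab $\{c\le f\le t\}$. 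The paper closes this with a short Mayer--Vietoris argument: for $t>c$ close to $c$, $\chi(\{f\ge c\})=\chi(\{f\ge t\})+\chi(\{c\le f\le t\})-\chi(f^{-1}(t))$, and since $f$ fibres trivially over $[c,t]$ the middle term equals $\chi(f^{-1}(t))$, so $\chi(\{f\ge c\})=\chi(\{f\ge t\})$; the same argument handles $\{f\le t\}$ and the side $t<c$. With that supplement (and the sphericalness-implies-triviality step you already cite, which also settles the odd case via constancy of $\chi(f^{-1}(t))$), your argument coincides with the paper's.
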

\proof By Theorem \ref{thm:main-function}, we know that the function $t\mapsto K(t)$ is continuous at $c$. 
Then we apply Theorem 4.5 in \cite{DutertreGeoDedicata2004}. If $n$ is odd, the result is clear because the 
function $t \mapsto \chi \left( f^{-1} (t) \right)$ is constant in a neighborhood of $c$. If $n$ is even, it 
is enough to prove that the functions $t \mapsto \chi \left( \{f \ge t \} \right)$ and 
$t \mapsto \chi \left( \{f \le t \} \right)$ are constant in a neighborhood of $c$. By the Mayer-Vietoris 
sequence, if $t>c$  then we have 
$$
\chi \left( \{f \ge c \} \right) = \chi \left( \{f \ge t \} \right) + \chi 
\left( \{ c \le f \le t \} \right) - \chi \left( f^{-1}(t)  \right).
$$
So if $t$ is close enough to $c$ then $\chi \left( \{f \ge c \} \right) = \chi \left( \{f \ge t \} \right)$, 
for $f$ is a fibration over $[c,t]$. Similarly we can show that $\chi \left( \{f \le c \} \right) = 
\chi \left( \{f \le t \} \right)$ for $t>c$ close enough to $c$. The same argument works for $t<c$. 
\endproof
%
%
%
%
%
%
%
%
%
%
%
%
%
%
%
%
%
%
%
%
%
%
%
%
%
%
%
%
%

%
%

%
%
%
%
%
%
%
%
%
%
%
%
%
%
%
%
%
%
%
%
%
%
%
%
%
%

%
%

%
%
%
%
%
%
%
%
%
%
%
%
%
%
%
%
%
%
%
%
%
%
%
%
%
%
%
%
%
%

\end{document}